\documentclass[11pt]{amsart}
\usepackage{graphicx}
\usepackage{mathpazo}
\usepackage{amssymb}
\usepackage{epstopdf}
\usepackage[cmtip,all]{xy}
\usepackage{hyperref}
\usepackage{mathrsfs}
\usepackage{enumerate}
\hypersetup{linktocpage}
\DeclareGraphicsRule{.tif}{png}{.png}{`convert #1 `dirname #1`/`basename #1 .tif`.png}
\newtheorem{prop}{Proposition}[section]
\newtheorem{lemma}[prop]{Lemma}
\newtheorem{thm}[prop]{Theorem}

\usepackage[pdftex,paper=a4paper,portrait=true,textwidth=400pt,textheight=670pt,tmargin=4cm,marginratio=1:1]{geometry}

\usepackage{mathrsfs}
\usepackage{amsthm}
\usepackage{amssymb}
\usepackage{amsfonts}
\usepackage{tikz} 
\usepackage{mathtools}
\usepackage{setspace}
\usepackage{cancel}
\usepackage{chemarrow}
\usetikzlibrary{matrix,arrows} 
\usepackage{amscd}

\newtheorem{theorem}{Theorem}

\newtheorem{cor}[prop]{Corollary}

\newtheorem{assumption}{Assumption}
\theoremstyle{definition}

\newtheorem{remark}[prop]{Remark}
\newtheorem{defn}[prop]{Definition}

\newcommand{\X}{\mathfrak{X}}
\newcommand{\CC}{\mathbb{C}}
\renewcommand{\P}{\mathbb{P}}
\renewcommand{\L}{\mathcal{L}}

\newcommand{\tX}{\widetilde{X}}

\newcommand{\tL}{\widetilde{L}}

\newcommand{\tD}{\widetilde{D}}
\newcommand{\bP}{{\bf{P}}}
\newcommand{\I}{\mathcal{I}}

\newcommand{\tC}{\widetilde{C}}
\newcommand{\rC}{\mathfrak{C}}

\newcommand{\V}{\mathcal{V}}
\newcommand{\tpi}{\widetilde{\pi}}
\renewcommand{\O}{\mathcal{O}}
\newcommand{\Z}{\mathbb{Z}}
\newcommand{\Q}{\mathbb{Q}}
\newcommand{\F}{\mathcal{F}}

\newcommand{\G}{\mathcal{G}}
\newcommand{\K}{\mathcal{K}}
\newcommand{\FF}{\mathbb{F}}
\newcommand{\GG}{\mathbb{G}}
\renewcommand{\SS}{\mathbb{S}}
\newcommand{\Ext}{\mathcal{E}xt}
\newcommand{\supp}{\operatorname{supp}}
\newcommand{\M}{\mathcal{M}}

\newcommand{\hilb}{\operatorname{Hilb}}
\newcommand{\disp}{\displaystyle}
\newcommand{\ex}{\operatorname{ext}}

\newcommand{\Home}{\mathcal{H}om}
\newcommand{\Hom}{\operatorname{Hom}}
\newcommand{\ch}{\mathsf{ch}}
\newcommand{\ob}{\operatorname{Ob}}
\newcommand{\ext}{\operatorname{Ext}}

\newcommand{\W}{\mathfrak{W}}
\newcommand{\Y}{\mathfrak{Y}}
\newcommand{\quot}{\operatorname{Quot}}
\newcommand{\A}{\mathbb{A}}
\newcommand{\fA}{\mathfrak{A}}

\newcommand{\fC}{\mathfrak{C}}

\newcommand{\cX}{\mathcal{X}}

\title[Donaldson-Thomas Invariants of 2-dimensional sheaves]{Donaldson-Thomas Invariants of 2-Dimensional sheaves inside threefolds and modular forms}
\author[Amin Gholampour and Artan Sheshmani]{Amin Gholampour and Artan Sheshmani}
\date{\today}                                   
\begin{document}
\maketitle
\begin{abstract}
Motivated by the S-duality conjecture, we study the Donaldson-Thomas invariants of the 2-di\-men\-sion\-al Gieseker stable sheaves on a threefold. These sheaves are supported on the fibers of a nonsingular threefold $X$ fibered over a nonsingular curve. In the case where $X$ is a $K3$ fibration, we express these invariants in terms of the Euler characteristic of the Hilbert scheme of points on the $K3$ fiber and the Noether-Lefschetz numbers of the fibration. We prove that a certain generating function of these invariants is a vector modular form of weight $-3/2$ as predicted in S-duality.

\end{abstract}
\section{Introduction}
 \label{sec:intro}
 \subsection{Overview}
We study the invariants virtually counting the configurations of a number of points and a vector bundle supported on the members of a system of divisors inside a nonsingular threefold. One of our motivations is that these invariants have been studied by the physicists  \cite{DM07,a78,a95,a100, a136} as a set of supersymmetric BPS invariants associated to D4-D2-D0 systems. By S-duality conjecture, the generating series of these invariants are expected to be modular. 

In this paper we interpret these invariants by means of the moduli spaces of Gieseker stable coherent sheaves with 2-dimensional support inside a nonsingular threefold $X$. Another motivation for considering these moduli spaces is to find a sheaf-theoretic analog of the formulas proven in \cite{a90} that relate the Gromov-Witten invariants of a threefold to the Gromov-Witten invariants of a system of its divisors.

\subsection{Moduli spaces of 2-dimensional sheaves}\label{sec:intro}
Let $X$ be a nonsingular projective threefold over $\CC$ with a fixed polarization $L$. For a given nonzero effective irreducible divisor class $F\in \text{Pic}(X)$, we fix a Chern character vector \begin{equation} \label{ch}\ch=(0,rF,\gamma,\ch_3)\in \oplus_{i=0}^3 H^{2i}(X,\Q),\end{equation} with $r>0$.  If $\F$ is a coherent sheaf with $\operatorname{ch}(\F)=\ch$ then, $\F$ is supported on some divisor with the numerical class $r'F$ where $r' \mid r$. We always assume that any Gieseker $L$-semistable sheaf $\F$ with $\operatorname{ch}(\F)=\ch $ is stable.

We consider the moduli space $\M=\M^L(X,\ch)$ of Gieseker $L$-semistable sheaves with Chern character $\ch$.  By our assumption, $\M$ is projective and the geometric points correspond to the isomorphism classes of stable (hence pure) 2-dimensional sheaves with Chern character $\ch$. It is proven in \cite{a20} that $\M$ admits a perfect obstruction theory if 

 $$\ext^3(\F,\F)_0=0$$  for any geometric point $\F \in \M$, where the index $0$ indicates the trace free part of $\ext^3(\F,\F)$. 
In this case, Thomas constructs (Theorem \ref{thm:trunc1}) a natural perfect obstruction theory $E^\bullet \to L^\bullet_\M$ such that $$h^i({E^\bullet}^\vee)_\F\cong \ext^{i+1}(\F,\F)$$ for $i=0,1$. By \cite{a2,a14} one obtains a virtual cycle $$[\M]^{vir}=[\M,E^\bullet]^{vir} \in A_{\mathrm{vd}}(\M)$$ where $\mathrm{vd}=\ex^1(\F,\F)-\ex^2(\F,\F)$ is the virtual dimension of $\M$.

In this paper, we only consider the case $\mathrm{vd}=0$. The Donaldson-Thomas invariants are then defined by  \begin{equation} \label{DT} DT(X,\ch)=\deg( [\M]^{vir}) \in \Z.\end{equation} 
When $X$ is a Calabi-Yau threefold, the obstruction theory above is symmetric (and hence $\mathrm{vd}=0$), and by Behrend's celebrated result $DT(X, \ch)=e(\M,\nu_{\M})$ where $\nu_{M}$ is the Behrend's constructible function (\cite{a1}). %

Besides Calabi-Yau threefolds, as we will see in Section \ref{sec:fibration} (see Proposition \ref{prop:vd}), another important situation in which  $\mathrm{vd}=0$ occurs is when   
\begin{enumerate}[($\star$i)] 
\item the nonsingular projective threefold $X$ admits a surjective morphism \begin{equation}\label{equ:fibration} \pi:X\to C \end{equation} with irreducible fibers to a nonsingular projective genus $g$ curve $C$,
\item $K_X\cong \pi^*\K$ for some $\K \in \text{Pic}(C)$,
\item $F$ in \eqref{ch} is the class of a fiber,
\item any Gieseker $L$-semistable sheaf $\F$ with $\operatorname{ch}(\F)=\ch$ as in \eqref{ch} is stable.
\end{enumerate}

This paper is devoted to a detailed study of the moduli space $\M=\M^L(X,\ch)$ and the invariants $DT(X,\ch)$ in the latter situation. $\M$ can have several irreducible components of different dimensions. In the case that \eqref{equ:fibration} is a smooth morphism, we find explicit formulas for the restrictions of $[\M]^{vir}$ to these components.

\subsection{Noether-Lefschetz numbers}
In case the fibration \eqref{equ:fibration} is a smooth $K3$ fibration, \cite{a90} defines the Noether-Lefschetz numbers $NL^\pi_{h,\gamma}$ as intersection numbers in the moduli space of quasi-polarized $K3$ surfaces, and relates them to the invariants defined by Kudla-Millson \cite{a117} and Borcherds \cite{a116, a125}. Here $X$ does not need to be projective, and the line bundle $L$ on $X$ is only required to give a quasi-polarization when restricted to each fiber (see \cite[Section 1.2]{a90}). 

 Let  $\ell=F \cdot L^2$, and $\gamma^{PD} \in H_2(X,\Z)^\pi$ with $L \cdot \gamma =d$.
Informally, $NL^\pi_{h,\gamma}$ is the number of the fibers $i:S\hookrightarrow X$ of $\pi$ for which there exists a $(1,1)$ class $\beta \in H^2(S,\Z)$ such that $$\beta^2=2h-2 \;\;\text{and}\;\;  i_*\beta^{PD}=\gamma^{PD}.$$ Here $PD$ stands for the Poincar\'{e} dual. It is proven in \cite{a90} that $NL^\pi_{h,\gamma}$ vanishes if $h>1+d^2/2\ell$, and 
\begin{equation} \label{symmetry}
NL^\pi_{h,\gamma}=NL^\pi_{h+d+\ell/2, \gamma+F\cdot L}.
\end{equation}

For $\disp NL^{\pi}_{h,d}=\sum_{\tiny \begin{array}{c}   \gamma^{PD} \in H_2(X,\Z)^\pi \\L\cdot \gamma=d \end{array}}NL^{\pi}_{h,\gamma},$ define
$$\Phi^{\pi}_d(q)=q^{1+d^2/2\ell}\sum_{h\in \Z} NL^{\pi}_{h,d} q^{-h},$$ and set $$\Phi^{\pi}(q)=\sum_{d=0}^{\ell-1}\Phi^{\pi}_d(q)v_d \in\CC[[q^{1/2\ell}]]\otimes \CC[\Z/\ell\Z].$$  Then, $\Phi^{\pi}(q)$ is a vector valued modular form of weight 21/2 \cite{a90, a125, a116}. 

\subsection{Main results}
Our main application is the case when the fibration \eqref{equ:fibration} is a smooth $K3$ fibration. The condition ($\star$ii) above is then immediate. Let $$\mathrm{k}=\deg(\K)-2g+2.$$
We consider a slightly more general situation in which $X$ is only locally projective over $C$; $\M(X,\ch)$ and $DT(X,\ch)$ can still be defined (see Remark \ref{rem:nonproj}). Then, we express $DT(X,\ch)$ in terms of the Euler characteristic of the Hilbert scheme of points on a $K3$ surface and the Noether-Lefschetz numbers:

\begin{theorem} \label{thm:main formula}
Let $\pi:X\to C$ be a smooth $K3$ fibration with the fiber class $F$. For the Chern character vector $\ch$ as in \eqref{ch} satisfying ($\star$iv) above,  we have 
\begin{align*} DT(X, \ch)=&\sum_{h\in \Z} e(\operatorname{Hilb}^{h-r\ch_3-r^2}(K3))\cdot NL^\pi_{h,\gamma}\\&-\mathrm{k}\delta_{0, \gamma} \cdot e(\operatorname{Hilb}^{1-r\ch_3 -r^2}(K3)),
\end{align*}
where $\delta_{0, \gamma}$ is the Kronecker delta function. 
 \end{theorem}
The proof of Theorem \ref{thm:main formula} follows the same ideas as the proof of Theorem 1 in \cite{a90}, and is given in Section \ref{sec:K3}. Using \eqref{symmetry} and the formula above, we will deduce a curious symmetry among the invariants $DT(X,\ch)$ (see Corollary \ref{cor:comp}).

Now we set $r=1$ in $\ch$ and take the $K3$ fibration $X$ above to be projective, but we allow $X$ to have finitely many nodal singular fibers. We will prove that a certain generating function of the invariants $DT(X,\ch)$, for which $\ch_0=0$ and $\ch_1=F$ are fixed and $\ch_2, \ch_3$ are summed over, is a vector modular form of weight $-3/2$. This confirms the prediction from $S$-duality mentioned above (see \cite[Section 4]{GST14}).

More precisely, let $$P_{d,c}(m)=(\ell/2)m^2+dm+c $$ be a degree 2 polynomial in $m$ with $c,d\in \Z$.  
We let $$DT_L(X,P_{d,c})=\sum_{\tiny \begin{array}{c} \gamma, \ch_3 \; \text{ s.t.} \\ \ch=(0,F,\gamma,\ch_3)\\P_L(\ch)=P_{d,c} \end{array}}DT(X,\ch),$$ where  $P_L(\ch)$ is the Hilbert polynomial associated to the Chern character vector $\ch$. Note that since $r=1$, for any choice of $\gamma$ and $\ch_3$, the Chern character vector $\ch$ satisfies ($\star$iv) above.
Define the generating series $$Z_d(X,q)=q^{1+d^2/2\ell}\sum_{c\in \Z}DT_L(X,P_{d,c})q^{-c},$$  $$Z(X,q)=\frac{\mathrm{k}v_0}{\Delta(q)}+\sum_{d=0}^{\ell-1}Z_d(X,q)v_d \in \CC[[q^{1/2\ell}]]\otimes \CC[\Z/\ell\Z]$$ where $\Delta(q)=q\prod_{n\ge 1}(1-q^n)^{24}$ is the discriminant cusp form of weight $12$.  Note that tensoring by $L^{\pm 1}$ induces isomorphisms of the moduli spaces $\M(X,\ch)$, so the generating series $Z_d(X,q)$ and $Z_{d\pm \ell}(X,q)$ only differ by a shift in the power of $q$. We associate to $X$ a (not necessarily projective) $K3$ fibration $\tpi:\tX \to \tC$ with nonsingular fibers. For this, we take a double cover of $X$ branched over the singular fibers, and then we resolve the conifold singularities of the resulting threefold  (see Section \ref{sec:conifold} for more details).
Theorem \ref{thm:main formula} can be applied to $\tX$. We then use degeneration techniques to express the invariants of $X$ in terms of the invariants of $\tX$. As a result, we obtain the following theorem:
\begin{theorem} \label{thm:K3} Let $\pi:X\to C$ be a $K3$ fibration with finitely many of the fibers having nodal singularities. 
Then, $$Z(X,q)=\frac{\Phi^{\tpi}(q)}{2\Delta(q)}.$$ 
\end{theorem}
The proof of Theorem \ref{thm:K3} is given in Section \ref{sec:conifold}. To our knowledge, the result of Theorem \ref{thm:K3} is the only instance where the full generating series of such DT invariants is proven to be given by a vector valued modular form.

The basic example of $X$ in Theorem \ref{thm:K3} is the Lefschetz pencil of quartics $\pi:X \subset \P^3\times \P^1\to \P^1$ (generic divisor of type $(4,2)$) which is a Calabi-Yau threefold, and via the projection to $\P^1$, is realized as a $K3$ fibration with 216 nodal fibers. In this case $\mathrm{k}=2$, and  $\Phi^{\tpi}(q)$ is explicitly evaluated in \cite{a90} for the nonsingular models of the Lefschetz pencil of quartics, and hence $Z(X,q)$ is completely known by Theorem \ref{thm:K3}.

\section*{Acknowledgment}
We would like to thank Kai Behrend, Patrick Brosnan, Jim Bryan, Daniel Huybrechts, Jun Li, Davesh Maulik, Gregory Moore, Yukinobu Toda and particularly Richard Thomas for many helpful discussions. 

A. G. was partially supported by NSF grant DMS-1406788. A. S. was partially supported by World Premier International Research Center Initiative (WPI initiative), MEXT, Japan, during the time that this work was in progress. Furthermore, A. S.  would like to thank the University of British Columbia, Max Planck Institut f\"{ur} Mathematik, Isaac Newton Institute for Mathematical Sciences in University of Cambridge, Kavli IPMU, Center for quantum geometry of moduli spaces at Aarhus University, and Center for mathematical sciences and applications at Harvard University for their help and support.

\section{Threefolds fibered over curves} \label{sec:fibration}
Let $\pi:X \to C$  be as in \eqref{equ:fibration} and the conditions ($\star$i)-($\star$iv) in Section \ref{sec:intro} are satisfied.  By Bertini's theorem general fibers of $\pi$ are nonsingular. In other words, we allow finitely many fibers to be possibly singular or even non-reduced. 
A typical example for such an $X$ is the Enriques Calabi-Yau threefold considered by \cite{a120} in the context of Gromov-Witten theory; it can be realized as a $K3$ fibration over $\P^1$ with 4 doubled Enriques surfaces. 

We consider $\M=\M^L(X,\ch)$, the moduli space of Gieseker $L$-stable coherent sheaves with the Chern character $\ch$. Any geometric point of $\M$ corresponds to the isomorphism  class of a coherent sheaf $\F$ which is supported on the fiber(s) of $\pi$.  In fact we have: 

\begin{lemma} \label{reduced}
For any geometric point $\F \in \M$, the support of $\F$ is reduced and connected. 
\end{lemma}
\begin{proof}
The support of $\F$ is connected since $\F$ is stable. To see the support of $\F$ is reduced, denote by $S$ the support of $\F$ with the reduced induced structure, and let $I=\O_X(-S)$ be the ideal sheaf of $S$ in $X$. We have $\operatorname{ch}(I|_{S})=1$, so since $\F$ is supported on $S$, we see that $\F$ and $\F\otimes I$ have the same Hilbert polynomial. Tensoring the short exact sequence $0\to I\to \O_X\to \O_S\to 0$ by $\F$, we get the exact sequence $$\F \otimes I\to \F \to \F|_S\to 0.$$ By the stability of $\F$ and $\F\otimes I$, the first map in the sequence above is either an isomorphism or the zero map (see \cite{a9}[Proposition 1.2.7]). But the former is not possible (otherwise $\F|_S=0$), so we conclude that the second map in the sequence above is an isomorphism, and this finishes the proof.  
\end{proof}

\begin{prop} \label{prop:vd}
$\M$ admits a perfect obstruction theory with $\mathrm{vd}(M)=0$.
\end{prop}
\begin{proof}
By Lemma \ref{reduced} and ($\star$ii) in Section \ref{sec:intro}, for any $\F \in \M$ we have  $\F\otimes K_X \cong \F$. Serre duality then implies that $$\operatorname{Ext}^3(\F,\F)\cong \operatorname{Hom}(\F,\F)^\vee \cong \CC \quad \text{and} \quad \operatorname{Ext}^2(\F,\F)\cong \operatorname{Ext}^1(\F,\F)^\vee.$$ 
Therefore, $\ext^3(\F,\F)_0=0$ over $\mathcal{M}$. Proposition follows from \cite[Theorem 3.30]{a20}.  
\end{proof}

\textbf{Notation.} We denote by $p:X\times \M\rightarrow \M$ the projection, and let $\rho: \mathcal{M}\to C$ be the natural morphism which sends the stable sheaf $\F$ with support $S$ to $\pi(S)\in C$. The morphism $\rho$ is well-defined by Lemma \ref{reduced}. 
Then the universal sheaf $\FF$ \footnote{$\FF$ may exist only as a twisted sheaf (see \cite[Section 3]{a20} for a relevant discussion).} is the push forward of a rank $r$ torsion-free sheaf $\mathbb{G}$ supported on the codimension 1 closed subscheme $$\SS:=X {_\pi}\times_\rho \M \xhookrightarrow{i} X\times M.$$ $\SS$ is the pullback of the diagonal in $C\times C$, and hence $\O_\SS(-\SS)\cong i^*\, p^* \, \rho^* \;\omega_C$.
\begin{thm} (\cite{a20}) \label{thm:trunc1}
The perfect obstruction theory over $\mathcal{M}$ mentioned in Proposition \ref{prop:vd} is given by the following morphism in the derived category:
\begin{equation*} 
E^{\bullet}:=\left(\tau^{[1,2]}Rp_{*}(R\mathscr{H}om_{X\times\mathcal{M}}(\mathbb{F},\mathbb{F})\right)^{\vee}[-1]\rightarrow\mathbb{L}^{\bullet}_{\mathcal{M}}.
\end{equation*} \qed
\end{thm}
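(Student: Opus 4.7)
My plan is to follow Thomas's Atiyah-class construction \cite{a20}, adapted to the 2-dimensional sheaf setting. First I would produce the (possibly twisted) universal sheaf $\FF = i_{\SS*}\mathbb{G}$ on $X\times\M$ and form its Atiyah class $\mathrm{At}(\FF)\in\Ext^1(\FF,\FF\otimes\mathbb{L}^\bullet_{X\times\M})$. Composing with the natural projection $\mathbb{L}^\bullet_{X\times\M}\to\pi_{\M}^*\mathbb{L}^\bullet_{\M}$ gives a partial Atiyah class, which by Hom-tensor adjunction and pushforward along $\pi_{\M}$ is equivalent to a morphism
$$R\pi_{\M*}R\mathscr{H}om(\FF,\FF)\to\mathbb{L}^\bullet_{\M}[1].$$
Dualizing and shifting by $-1$ produces the candidate map $E^\bullet\to\mathbb{L}^\bullet_{\M}$ displayed in the statement.

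Next I would verify that $E^\bullet$ is a perfect complex of amplitude $[-1,0]$ and satisfies the obstruction-theory axioms. The truncation $\tau^{[1,2]}$ cuts off the $\Ext^0$ and $\Ext^3$ contributions, which are handled by the trace splitting: Assumption \ref{ass:hilb} together with stability gives $\Hom(\F,\F)=\CC$ at every closed point $\F\in\M$, and Assumption \ref{ass:K} combined with Serre duality yields $\Ext^3(\F,\F)\cong\Hom(\F,\F)^\vee\cong\CC$. Both are trivialized by the trace map, so the truncated complex captures exactly the trace-free (reduced) Ext sheaves. A cohomology-and-base-change argument then shows this truncation is locally quasi-isomorphic to a two-term complex of locally free sheaves in degrees $1$ and $2$, so after dualizing and shifting, $E^\bullet$ has amplitude $[-1,0]$ with $h^i(E^{\bullet\vee})\cong\Ext^{i+1}_{\pi_{\M}}(\FF,\FF)$ for $i=0,1$, matching the classical identification of infinitesimal deformations with $\Ext^1$ and obstructions with $\Ext^2$.

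The principal technical obstacle is the twisted nature of $\FF$: since $\FF$ may exist only as a twisted sheaf with respect to a Brauer class pulled back from $\M$, the Atiyah class and trace constructions must first be performed on a gerbe $\widetilde{\M}\to\M$. The hypothesis that the twisting is pulled back from $\M$ ensures that $R\mathscr{H}om(\FF,\FF)$ and its reduced Ext sheaves are untwisted, so the constructions do descend to $\M$; verifying this descent and the compatibility of the truncation with base change is the delicate bookkeeping step inherited from \cite[Section 3]{a20}. A secondary subtle point is checking that the resulting morphism $E^\bullet\to\mathbb{L}^\bullet_{\M}$ is an isomorphism on $h^0$ and a surjection on $h^{-1}$; this amounts to confirming that the partial Atiyah class genuinely controls the deformation theory of the moduli problem, which follows from Illusie's formalism together with the simplicity of stable sheaves on $X$.
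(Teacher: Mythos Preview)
The paper does not give its own proof of this theorem: it is stated as a citation to Thomas \cite{a20} and closed immediately with \qed. Your outline is a faithful sketch of the construction in that reference (with the refinements of Huybrechts--Thomas \cite{a10}): the Atiyah class of the universal sheaf, projected to $\pi_{\M}^*\mathbb{L}^\bullet_{\M}$ and pushed forward, yields the map; stability plus the vanishing of $\ext^3(\F,\F)_0$ force the relevant Ext sheaves to be locally free after truncation, giving perfect amplitude $[-1,0]$; and the twisted-sheaf issue is handled exactly as you describe, since $R\mathscr{H}om(\FF,\FF)$ is untwisted. So your proposal is correct and matches the cited source; there is simply nothing in the present paper to compare it against.

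One minor remark: in your second paragraph you invoke Assumption~\ref{ass:K} to get $\Ext^3(\F,\F)\cong\CC$, but the theorem as stated in \cite{a20} (and as summarized in the paper's introduction) only needs the weaker hypothesis $\ext^3(\F,\F)_0=0$. Assumption~\ref{ass:K} is one sufficient condition guaranteeing this, but the obstruction-theory construction itself does not require the full Serre-duality symmetry $\Ext^2\cong(\Ext^1)^\vee$; that extra symmetry is what later makes the virtual dimension zero.
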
 

The corresponding DT invariant $DT(X,\ch)$ is defined as in \eqref{DT}.

\subsection{Smooth fibrations} \label{sec:smooth}

In this section we specialize to the case where the morphism \eqref{equ:fibration} is smooth and the conditions ($\star$i)-($\star$iv) in Section \ref{sec:intro} are satisfied. Then the fibers of $\pi$ are nonsingular projective surfaces with trivial canonical bundles. 
\begin{defn} \label{defn:comps}
We call a connected component $\M_{c}$ of $\M$ a \emph{type I component} if $\rho(\M_c)=C$, otherwise we call $\M_c$ a \emph{type II component}. A type II component is called \emph{isolated} if it ismorphic to a moduli space of torsion free sheaves on a nonsingular fiber of $\pi$. 
We usually denote a type I component by $\M_0$ and an isolated type II component by $\M_{iso}$. 
\end{defn}

Mukai in \cite{a113} proves that the moduli space of stable sheaves on nonsingular projective surfaces with trivial canonical bundles is nonsingular. This immediately implies that 
\begin{lemma} \label{lem:mukai}
If $\M_c$ is a type I or an isolated type II component of $\M$ then $\M_c$ is nonsingular. \qed
\end{lemma}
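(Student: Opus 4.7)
The plan is to treat the two cases separately, in each case reducing to Mukai's smoothness-and-irreducibility theorem on a single fiber of $\pi$.

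The isolated type II case is essentially immediate. By Definition \ref{defn:comps}, such a component $\M_{iso}$ is tautologically isomorphic to the moduli space $\M(\pi^{-1}(p); P)$ of rank $r$ Gieseker stable sheaves on a single fiber of $\pi$. Under the standing hypotheses of Section \ref{sec:smooth}, this fiber is a smooth projective surface whose canonical bundle is trivial by Assumption \ref{ass:K}, hence a K3 or abelian surface, so Mukai's theorem applies verbatim.

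For a type I component $\M_0$, I would first reduce the irreducibility claim to smoothness of $\M_0$ as a scheme: a connected smooth scheme of finite type over $\CC$ is irreducible, and $\M_0$ is connected by being a connected component of $\M$. Smoothness at a closed point $\F$ is equivalent, via the perfect obstruction theory of Theorem \ref{thm:trunc1}, to the vanishing $\Ex^2_X(\F,\F)_0 = 0$. Setting $Y = \pi^{-1}(\rho(\F))$ with inclusion $i: Y \hookrightarrow X$, smoothness of $\pi$ identifies $N_{Y/X}$ with $i^*\pi^*T_{\rho(\F)}C$, which is trivial of rank one, so the local-to-global spectral sequence
$$E_2^{a,b} = \Ex^a_Y(\F,\F \otimes \Lambda^b N^\vee_{Y/X}) \Rightarrow \Ex^{a+b}_X(\F,\F)$$
collapses to two rows $b=0,1$, both isomorphic to $\Ex^a_Y(\F,\F)$. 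The only potentially nonzero $E_2$-differential bearing on $\Ex^2_X$ is $d_2^{0,1}: \Ex^0_Y(\F,\F) \to \Ex^2_Y(\F,\F)$, given by the contraction of the Atiyah class of $\F$ with the Kodaira--Spencer class of $\pi$ at $\rho(\F)$. Combining this with Mukai's central input that the trace map $\Ex^2_Y(\F,\F) \to H^2(\O_Y)$ is an isomorphism for a stable sheaf on a K3 or abelian surface, so that $\Ex^2_Y(\F,\F)_0 = 0$, and invoking compatibility of traces under $i_*$, should force $\Ex^2_X(\F,\F)_0 = 0$.

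The main obstacle I anticipate is the careful bookkeeping on the traceless parts of the spectral sequence, in particular verifying that the trace map $\Ex^2_X(\F,\F) \to H^2(\O_X)$ detects every class that survives the filtration. A more geometric alternative, which I would adopt if the abstract bookkeeping becomes unwieldy, is to build a versal deformation of $\F$ inside $X$ by assembling Mukai's versal family on $Y$ with the one-parameter deformation of $Y$ obtained by moving $\rho(\F)$ inside $C$; this would exhibit an \'etale neighborhood of $\F$ in $\M_0$ as the product of an open subset of Mukai's moduli on $Y$ with an open subset of $C$, from which smoothness, the expected dimension $\ex^1_Y(\F,\F)+1$, and local irreducibility are manifest.
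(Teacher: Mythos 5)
Your isolated type II case is fine and coincides with the paper's argument: such a component is by definition a moduli space of stable sheaves on a single fiber, which is a smooth projective surface with trivial canonical bundle (hence K3 or abelian), so Mukai's theorem applies directly. The type I case, however, contains a genuine gap. First, smoothness at $\F$ is \emph{implied by}, not equivalent to, vanishing of the obstruction space. More importantly, the vanishing $\Ex^2_X(\F,\F)_0=0$ that your argument is designed to establish is simply false here: by Assumption \ref{ass:K} and Serre duality, $\Ex^2_X(\F,\F)\cong \Ex^1_X(\F,\F)^\vee$, whose dimension is at least $\dim\M_0>0$, and passing to trace-free parts removes at most $h^2(\O_X)$ of it. You can see the failure inside your own spectral sequence: writing $\F=i_*\G$ with $Y=\pi^{-1}(\rho(\F))$, the term $E_2^{1,1}\cong \Ex^1_Y(\G,\G)$ also contributes to total degree $2$, and no differential can touch it, since $d_2^{1,1}$ lands in $E_2^{3,0}=\Ex^3_Y(\G,\G)=0$ ($Y$ is a surface) and nothing maps into $E_2^{1,1}$; so an entire copy of the tangent space of the fiber moduli space survives into $\Ex^2_X(\F,\F)$, and it is not killed by trace considerations. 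This surviving piece is not bookkeeping noise: it is exactly the fiber of the obstruction bundle $\ob_0$ of Proposition \ref{decompos}, whose top Chern class computes the virtual class. The whole point of Section \ref{sec:smooth} is that $\M_0$ is smooth \emph{while} its absolute obstruction theory is obstructed (virtual dimension $0$ against actual dimension $\dim\M(S;v)+1$), so any proof of smoothness that proceeds through $\Ex^2_X(\F,\F)_0=0$ cannot work.

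The repair is the route you relegate to a fallback, and it is essentially the paper's (one-line) argument: work relatively over $C$. For a stable sheaf $\G$ on a fiber $Y$, stability and Serre duality on $Y$ give $\Ex^2_Y(\G,\G)_0=0$, so deformations of $\G$ inside its fiber are unobstructed; hence the relative theory of Theorem \ref{thm:trunc2} makes $\rho\colon\M_0\to C$ smooth, with fibers open and closed in Mukai's moduli spaces $\M(S;v)$, which are smooth and irreducible by \cite{a113}. Since $C$ is a smooth curve, $\M_0$ is smooth, and a connected smooth scheme is irreducible, which is all the lemma asserts. So promote your ``geometric alternative'' to the main argument and delete the claimed absolute vanishing.
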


  We know that for any fiber $S$ of $\pi$ and any coherent sheaf $\G$ supported on $S$, $\ext^3_S(\G,\G)=0$. This essentially implies that a type I component $\M_0$ of  $\M$ admits a perfect $\rho$-relative obstruction theory (in the sense of \cite{a2}). 
\begin{thm}\label{thm:trunc2} (\cite{a20,a10})
There is a $\rho$-relative obstruction theory over a type I component $\M_0$ given by the following morphism in the derived category:
\begin{equation*}
 \left(\tau^{\geq 1}Rp_*\,i_*R\mathscr{H}om_{X {_\pi}\times_\rho \M_0}(\mathbb{G},\mathbb{G})\right)^{\vee}[-1]\rightarrow\mathbb{L}^{\bullet}_{\M_0/C}.
\end{equation*} \qed
\end{thm}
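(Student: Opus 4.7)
The plan is to relativize the construction of Theorem \ref{thm:trunc1} over the base curve $C$: replace $X\times\M$ by $\SS = X\times_C\M_0$ with $\GG$ (the rank-$r$ universal sheaf on $\SS$, for which $\FF = i_{\SS*}\GG$) in place of $\FF$, and $\mathbb{L}^\bullet_{\M}$ by the relative cotangent complex $\mathbb{L}^\bullet_{\M_0/C}$. The simplifying feature in this subsection is that $\pi: X\to C$ is smooth, so $\pi'_{\M_0}: \SS \to \M_0$ is also smooth (of relative dimension $2$), with fibers the nonsingular surfaces $S_p = \pi^{-1}(p)$. Because $\Ext^{\geq 3}$ vanishes trivially on a surface, the complex $R\pi'_{\M_0*}R\mathscr{H}om_\SS(\GG,\GG)$ has cohomology concentrated in degrees $[0,2]$, which is why only the one-sided truncation $\tau^{\geq 1}$ is needed in the statement (as compared with the two-sided $\tau^{[1,2]}$ of Theorem \ref{thm:trunc1}).

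The construction itself proceeds from the $C$-relative Atiyah class $\text{At}_C(\GG):\GG\to\GG\otimes \mathbb{L}^\bullet_{\SS/C}[1]$. The base-change isomorphism $\mathbb{L}^\bullet_{\SS/X}\cong \pi'^*_{\M_0}\mathbb{L}^\bullet_{\M_0/C}$ (valid because $\pi$ is flat) combined with the cotangent triangle for $\SS\xrightarrow{\pi'_X} X\xrightarrow{\pi} C$ gives a projection $\mathbb{L}^\bullet_{\SS/C}\to \pi'^*_{\M_0}\mathbb{L}^\bullet_{\M_0/C}$. Composing with $\text{At}_C(\GG)$ produces
$$\text{At}^{\M_0}(\GG):\GG\longrightarrow \GG\otimes \pi'^*_{\M_0}\mathbb{L}^\bullet_{\M_0/C}[1].$$
Applying $R\mathscr{H}om_\SS(\GG,-)$ and using the projection formula to isolate the cotangent factor, composing with the trace $R\mathscr{H}om_\SS(\GG,\GG)\to\O_\SS$, and pushing forward along $\pi'_{\M_0}$ yields
$$R\pi'_{\M_0*}R\mathscr{H}om_\SS(\GG,\GG) \longrightarrow \mathbb{L}^\bullet_{\M_0/C}[1].$$
The target is concentrated in positive degree (cotangent complexes sit in nonpositive degrees), so this map factors through $\tau^{\geq 1}$ of the source; Grothendieck-Serre duality for the smooth morphism $\pi'_{\M_0}$ then reinterprets the result as the required morphism $E^\bullet\to\mathbb{L}^\bullet_{\M_0/C}$ with $E^\bullet$ as stated.

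The remaining task is the Behrend-Fantechi perfect-obstruction-theory check: $(E^\bullet)^\vee$ should be concentrated in degrees $0, 1$ with $h^0$ the relative tangent sheaf $\Ext^1_{\pi'_{\M_0}}(\GG,\GG)$ and $h^1$ the relative obstruction sheaf $\Ext^2_{\pi'_{\M_0}}(\GG,\GG)$, and the morphism should induce an isomorphism on $h^0$ and a surjection on $h^{-1}$. The tor-amplitude part is immediate from the dimensional vanishing and from Gieseker stability (which gives $\Hom_{S_p}(\G,\G)=\CC$, so that $h^0 R\pi'_{\M_0*}R\mathscr{H}om_\SS(\GG,\GG)=\O_{\M_0}$). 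The main obstacle is the $h^0$/$h^{-1}$ condition: one must verify that the projected Atiyah class agrees with the relative Kodaira-Spencer class for $\rho$. Concretely, this amounts to identifying the boundary map realizing $\mathbb{L}^\bullet_{\SS/X}\cong \pi'^*_{\M_0}\mathbb{L}^\bullet_{\M_0/C}$ with the Kodaira-Spencer class of the family of stable sheaves on fibers of $\pi$; once this compatibility is in place, the rest of the argument is a direct relativization of the absolute proofs in \cite{a20, a10}.
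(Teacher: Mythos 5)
The paper offers no argument for this theorem --- it is quoted from \cite{a20,a10} --- and your outline follows the same relative Atiyah-class construction that those references carry out, so the overall route is the intended one. Two steps, however, do not work as written. The intermediate morphism $R\pi'_{\M_0*}R\mathscr{H}om_{\SS}(\GG,\GG)\rightarrow\mathbb{L}^{\bullet}_{\M_0/C}[1]$ that you claim to obtain by ``pushing forward'' does not exist except as the zero map: pushing the trace-composed Atiyah class forward only lands in $R\pi'_{\M_0*}\pi'^{*}_{\M_0}\mathbb{L}^{\bullet}_{\M_0/C}[1]$, and the adjunction unit $\mathbb{L}^{\bullet}_{\M_0/C}[1]\rightarrow R\pi'_{\M_0*}\pi'^{*}_{\M_0}\mathbb{L}^{\bullet}_{\M_0/C}[1]$ points the wrong way; moreover the source is concentrated in degrees $[0,2]$ while $\mathbb{L}^{\bullet}_{\M_0/C}[1]$ is concentrated in degrees $\leq -1$, so any such morphism vanishes. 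Grothendieck--Verdier duality is therefore not an afterthought that ``reinterprets'' the map; it is what produces it: the Atiyah class, viewed as an element of $\ext^{1}_{\SS}(\GG,\GG\otimes\pi'^{*}_{\M_0}\mathbb{L}^{\bullet}_{\M_0/C})$, corresponds under duality for the smooth proper surface fibration $\pi'_{\M_0}$ (which inserts the twist $\omega_{\pi'_{\M_0}}[2]$) to a morphism $R\pi'_{\M_0*}\bigl(R\mathscr{H}om_{\SS}(\GG,\GG)\otimes\omega_{\pi'_{\M_0}}\bigr)[1]\rightarrow\mathbb{L}^{\bullet}_{\M_0/C}$, and relative Serre duality identifies its source with $\bigl(R\pi'_{\M_0*}R\mathscr{H}om_{\SS}(\GG,\GG)\bigr)^{\vee}[-1]$.

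The truncation step also needs a different justification: the claim that the map ``factors through $\tau^{\geq 1}$ because the target is concentrated in positive degree'' is both false under the standard conventions and not the mechanism one uses. Rather, writing $A:=R\pi'_{\M_0*}R\mathscr{H}om_{\SS}(\GG,\GG)$, one composes the duality morphism above with the dual of the canonical truncation map $A\rightarrow\tau^{\geq 1}A$, shifted by $[-1]$; the cone of $(\tau^{\geq 1}A)^{\vee}[-1]\rightarrow A^{\vee}[-1]$ is $(\tau^{\leq 0}A)^{\vee}[-1]$, which is a line bundle placed in degree $+1$ because $h^{0}(A)\cong\O_{\M_0}$ by simpleness of the stable sheaves on the fibers. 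Hence passing to the truncation changes nothing in degrees $0$ and $-1$, and the conditions of \cite{a2} (isomorphism on $h^{0}$, surjection on $h^{-1}$) hold for the truncated map as soon as they hold for $A^{\vee}[-1]\rightarrow\mathbb{L}^{\bullet}_{\M_0/C}$ --- which is exactly the Atiyah/Kodaira--Spencer compatibility that you, like the paper itself, defer to \cite{a20,a10}. With these two repairs your argument is the standard one.
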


\begin{prop}\label{prop:absolute}
The $\rho$-relative obstruction theory over a type I component $\M_0$ given by Theorem \ref{thm:trunc2} induces a perfect obstruction theory $F_0^{\bullet}\rightarrow \mathbb{L}^{\bullet}_{\M_0}$ over $\M_0$.
\end{prop}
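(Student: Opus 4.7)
The plan is to build $F_0^\bullet$ from $E^\bullet_{rel}:=\left(\tau^{\geq 1}R\pi'_{\M_0*}(R\mathscr{H}om_{\SS}(\G,\G))\right)^{\vee}[-1]$ by means of the standard mapping-cone construction in the cotangent triangle of $\rho:\M_0\to C$, and then to verify the two defining properties of a perfect obstruction theory via a five-lemma argument on cohomology. This type of ``relative-to-absolute'' upgrade is a familiar operation whenever the base is smooth; here $C$ is a smooth curve, which will make the verification especially clean.

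First I would recall the canonical distinguished triangle of cotangent complexes
$$\rho^*\mathbb{L}^\bullet_C\longrightarrow\mathbb{L}^\bullet_{\M_0}\longrightarrow\mathbb{L}^\bullet_{\M_0/C}\xrightarrow{\;\delta\;}\rho^*\mathbb{L}^\bullet_C[1].$$
Composing the relative obstruction theory $E^\bullet_{rel}\to\mathbb{L}^\bullet_{\M_0/C}$ of Theorem \ref{thm:trunc2} with $\delta$ yields a morphism $E^\bullet_{rel}\to\rho^*\mathbb{L}^\bullet_C[1]$, and I would define $F_0^\bullet$ as the shifted cone of this composition, producing a distinguished triangle
$$\rho^*\mathbb{L}^\bullet_C\longrightarrow F_0^\bullet\longrightarrow E^\bullet_{rel}\xrightarrow{+1}\rho^*\mathbb{L}^\bullet_C[1].$$
Axiom (TR3) of triangulated categories then supplies a morphism $F_0^\bullet\to\mathbb{L}^\bullet_{\M_0}$ fitting into a morphism of triangles whose outer vertical arrows are $\mathrm{id}_{\rho^*\mathbb{L}^\bullet_C}$ and $E^\bullet_{rel}\to\mathbb{L}^\bullet_{\M_0/C}$.

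Next I would verify that $F_0^\bullet\to\mathbb{L}^\bullet_{\M_0}$ is a perfect obstruction theory. For the perfect amplitude, $\rho^*\mathbb{L}^\bullet_C\cong\rho^*\Omega_C$ is a line bundle concentrated in degree $0$, while $E^\bullet_{rel}$ has amplitude $[-1,0]$ (its cohomology sheaves being $\Ext^1_{\pi'_{\M_0}}(\G,\G)^\vee$ and $\Ext^2_{\pi'_{\M_0}}(\G,\G)^\vee$, using Assumption \ref{ass:K} to kill $\Ext^3$); the triangle above therefore forces $F_0^\bullet$ to have amplitude $[-1,0]$ as well. For the cohomological criterion, I would pass to the long exact sequences of cohomology sheaves attached to the two triangles and apply the five lemma: the left vertical is an isomorphism, and the right vertical is an isomorphism on $h^0$ and a surjection on $h^{-1}$ by hypothesis, so the same holds for the middle vertical $F_0^\bullet\to\mathbb{L}^\bullet_{\M_0}$.

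The only genuine subtlety is the non-uniqueness of the morphism supplied by (TR3); I would therefore choose this lift once and keep it fixed, noting that any such choice yields a bona fide perfect obstruction theory so the resulting virtual class on $\M_0$ is independent of the choice by the deformation-invariance formalism of Behrend--Fantechi. The most delicate ingredient is really the existence of the relative theory behind Theorem \ref{thm:trunc2}, which has already been invoked; once that is in hand, the passage to the absolute theory is formal and the argument above is the natural route.
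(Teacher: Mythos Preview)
Your proposal is correct and follows essentially the same route as the paper: both construct $F_0^\bullet$ as the shifted cone of the composite $E^\bullet_{rel}\to\mathbb{L}^\bullet_{\M_0/C}\to\rho^*\Omega_C[1]$ arising from the cotangent triangle for $\rho$, produce the map $F_0^\bullet\to\mathbb{L}^\bullet_{\M_0}$ by completing to a morphism of triangles, and verify the obstruction-theory axioms by comparing the two long exact cohomology sequences. Your version is slightly more explicit about the amplitude check, the five-lemma step, and the (TR3) non-uniqueness caveat, but the argument is the same.
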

\begin{proof}
Composing the morphism in the derived category in Theorem \ref{thm:trunc2} with the second morphism in the exact triangle 
 $\mathbb{L}^{\bullet}_{\M_0}\rightarrow\mathbb{L}^{\bullet}_{\M_0/C}\rightarrow \rho^* \omega_{C}[1],$ we obtain a morphism in the derived category $$\lambda:\left(\tau^{\geq 1}Rp_{*}\,i_*R\mathscr{H}om(\mathbb{G},\mathbb{G})\right)^{\vee}[-1]\rightarrow \rho^* \omega_{C}[1].$$ Now the two exact triangles below induce a morphism $g$ (the left vertical map) in the derived category.
\begin{equation*}\label{cone}
\begin{tikzpicture}
back line/.style={densely dotted}, 
cross line/.style={preaction={draw=white, -, 
line width=6pt}}] 
\matrix (m) [matrix of math nodes, 
row sep=2em, column sep=2.5em, 
text height=1.5ex, 
text depth=0.25ex]{ 
\operatorname{Cone}(\lambda)[-1]&\left(\tau^{\geq 1}Rp_*\,i_*R\mathscr{H}om(\mathbb{G},\mathbb{G})\right)^{\vee}[-1]&\rho^* \omega_{C}[1]\\
\mathbb{L}^{\bullet}_{\M_0}&\mathbb{L}^{\bullet}_{\M_0/C}&\rho^* \omega_{C}[1]\\};
\path[->]
(m-1-1) edge node [right] {$g$}(m-2-1)
(m-1-1) edge (m-1-2)
(m-1-2) edge node [above] {$\lambda$}(m-1-3)  
(m-1-2) edge (m-2-2)
(m-2-1) edge (m-2-2)
(m-2-2) edge (m-2-3)
(m-1-3) edge node [right] {$id$} (m-2-3);
\end{tikzpicture}
\end{equation*}
Define $F_0^{\bullet}:=\operatorname{Cone}(\lambda)[-1]$, then comparing the induced long exact sequences of cohomologies, one proves that $g: F_0^\bullet \to \mathbb{L}^{\bullet}_{\M_0}$ defines a perfect deformation-obstruction theory for $\M_0$. 
\end{proof}

We use the following proposition to compare the obstruction theories given by Theorems \ref{thm:trunc1} and \ref{thm:trunc2} on a type I component $\M_0$:

\begin{prop}\label{exact-rhom}

There exists the following exact triangle in the derived category of $ \M_0$:
\begin{align*}
&
\tau^{\ge 1}Rp_{*}\,i_*\; R\mathscr{H}om_{X{_\pi}\times_\rho \M_0}(\mathbb{G},\mathbb{G}) \rightarrow \tau^{[1,2]} Rp_{*}\;R\mathscr{H}om_{X\times\mathcal{M}_0}(\mathbb{F},\mathbb{F})\notag\\
&
\rightarrow \tau^{\le 2}Rp_{*}\, i_*\;R\mathscr{H}om_{X{_\pi}\times_\rho \M_0}(\mathbb{G},\mathbb{G}(\SS))[-1] 
\end{align*}
\end{prop}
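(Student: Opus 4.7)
The plan is to first establish an \emph{untruncated} version of the exact triangle in $D(\M_0)$ and then extract the stated truncated triangle via a formal d\'evissage. The geometric input is that $\SS=X\times_C\M_0$ sits inside $X\times\M_0$ as a Cartier divisor, being the pullback of the diagonal $C\hookrightarrow C\times C$ under $(\pi,\rho):X\times\M_0\to C\times C$; its normal bundle $\O_\SS(\SS)$ is the pullback of $T_C$ from $C$ and in particular restricts trivially to every fiber of $\pi$.

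For the untruncated triangle, I would apply the two-term Koszul resolution of $i_{\SS*}\O_\SS$ to compute $Li_\SS^{*}(i_{\SS*}\mathbb{G})$ as an object sitting in the distinguished triangle $\mathbb{G}\otimes\O_\SS(-\SS)[1]\to Li_\SS^{*}(i_{\SS*}\mathbb{G})\to \mathbb{G}$ on $\SS$. Combining with Grothendieck duality for the Cartier divisor $i_\SS$, namely $i_\SS^{!}(-)\cong Li_\SS^{*}(-)\otimes\O_\SS(\SS)[-1]$, and twisting the above triangle by $\O_\SS(\SS)[-1]$, produces
$$\mathbb{G}\to i_\SS^{!}(i_{\SS*}\mathbb{G})\to \mathbb{G}\otimes\O_\SS(\SS)[-1].$$
Applying $R\mathscr{H}om_\SS(\mathbb{G},-)$, pushing forward by $i_{\SS*}$ and invoking the adjunction identification $i_{\SS*}R\mathscr{H}om_\SS(\mathbb{G},i_\SS^{!}(i_{\SS*}\mathbb{G}))\cong R\mathscr{H}om_{X\times\M_0}(\mathbb{F},\mathbb{F})$ to recognize the middle term, and finally applying $R\pi_{\M_0*}$, yields an exact triangle $A\to B\to C[-1]$ in $D(\M_0)$ whose three terms are precisely those appearing in the proposition \emph{before} truncation.

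Cohomology and base change, together with Assumption~\ref{ass:K}, identify the fibers of $\mathcal{H}^i A$, $\mathcal{H}^i B$, $\mathcal{H}^i C$ over $[\mathbb{F}]\in\M_0$ with Ext groups on the $K3$-type fiber of $\pi$ supporting $\mathbb{F}$ (using that $\O_\SS(\SS)$ restricts trivially there), whence $A$ and $C$ live in cohomological degrees $[0,2]$ and $B$ in $[0,3]$. Stability of $\mathbb{F}$ forces the natural map $\mathcal{H}^0 A\to\mathcal{H}^0 B$ to be an isomorphism, and Serre duality on the fibers together with Assumption~\ref{ass:K} dually forces $\mathcal{H}^3 B\to\mathcal{H}^3(C[-1])$ to be an isomorphism. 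Two octahedra then extract the desired triangle: the first, using $\tau^{\le 0}A\cong\tau^{\le 0}B$ and the octahedral axiom applied to $\tau^{\le 0}A\to A\to B$, produces an intermediate distinguished triangle $\tau^{\ge 1}A\to\tau^{\ge 1}B\to C[-1]$; the second uses the factorization $\tau^{\ge 1}A\to\tau^{[1,2]}B\to\tau^{\ge 1}B$ (available because $\tau^{\ge 1}A\in D^{\le 2}$) together with the iso $\tau^{\ge 3}B\cong\tau^{\ge 3}(C[-1])$ to identify the cone of $\tau^{\ge 1}A\to\tau^{[1,2]}B$ with $\tau^{\le 2}(C[-1])$. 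The main technical hurdle is the careful derived-category setup of Grothendieck duality for $i_\SS$ and the identification of $i_\SS^{!}(i_{\SS*}\mathbb{G})$, where the normal bundle twist $\O_\SS(\SS)$—which becomes the third term of the triangle—first enters; once this untruncated triangle is in place, the truncation follows formally from the two cohomological isomorphisms above.
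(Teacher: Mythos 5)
Your proposal is correct and, in substance, reproduces the paper's own argument: both start from the standard triangle $\GG\otimes\O_{\SS}(-\SS)[1]\to Li_{\SS}^{*}i_{\SS*}\GG\to\GG$ for the divisor $\SS\subset X\times\M_0$, arrive at the same untruncated triangle with middle term $R\mathscr{H}om_{X\times\M_0}(\FF,\FF)$ and third term $i_{\SS*}R\mathscr{H}om_{\SS}(\GG,\GG\otimes\O_{\SS}(\SS))[-1]$, apply $R\pi_{\M_0*}$, and truncate using the degree ranges $[0,2]$, $[0,3]$, $[1,3]$ of the three terms. The one real divergence is how the middle term is identified: you twist by $\O_{\SS}(\SS)[-1]$, invoke Grothendieck duality $i_{\SS}^{!}(-)\cong Li_{\SS}^{*}(-)\otimes\O_{\SS}(\SS)[-1]$, apply the covariant functor $R\mathscr{H}om_{\SS}(\GG,-)$ and use the $(i_{\SS*},i_{\SS}^{!})$ adjunction, whereas the paper applies the contravariant $R\mathscr{H}om_{\SS}(-,\GG)$ directly to the Koszul triangle and needs only the elementary $(Li_{\SS}^{*},i_{\SS*})$ adjunction, thus avoiding any setup of $i^{!}$; the two routes are equivalent and yield the identical triangle. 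Your octahedral d\'evissage of the truncation is a correct, more detailed rendering of what the paper asserts in one line; note only that the isomorphisms $\mathcal{H}^{0}A\cong\mathcal{H}^{0}B$ and $\mathcal{H}^{3}B\cong\mathcal{H}^{3}(C[-1])$ already follow from the long exact sequence and the stated amplitudes (since $A$ lies in $[0,2]$ and $C[-1]$ in $[1,3]$), so the extra appeals to stability and fiberwise Serre duality at that point are harmless but not needed.
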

\begin{proof} 
Applying the functor $R\mathscr{H}om(-,\mathbb{G})$ to the exact triangle
$$\mathbb{G}(-\SS)[1]\rightarrow L{i^*}i_*\mathbb{G}\rightarrow \mathbb{G},$$ 
and using the adjoint property $L{i^*_{\SS}}\vdash i_{\SS*}$, we get the exact triangle
\begin{align}\label{triang}
&
i_{*}R\mathscr{H}om(\mathbb{G},\mathbb{G})\rightarrow R\mathscr{H}om(\mathbb{F},\mathbb{F})\rightarrow i_{*}R\mathscr{H}om(\mathbb{G},\mathbb{G}(\SS))[-1]. 
\end{align}
The result follows by applying $Rp_{*}$ and truncating the resulting exact triangle. Note that after applying $Rp_{*}$ to \eqref{triang} the terms of the resulting exact triangle from left to right are respectively concentrated in degrees $[0,2]$, $[0,3]$, and $[1,3]$. Moreover, the $0$th cohomologies of the first two terms, and the $3$rd cohomologies of the last two terms are isomorphic. As a result, after truncation we still get an exact triangle.
\end{proof}

Theorem \ref{thm:trunc1} and Proposition \ref{prop:absolute} give the virtual cycles $[\mathcal{M}_0,E^{\bullet}]^{vir}$ and $[\mathcal{M}_0,F^{\bullet}_0]^{vir}$ (see \cite{a2}). The relation between these two cycles is given by  

\begin{prop}\label{decompos} Let $\M_0$ be a type I component, and $\ob_0$ be the locally free sheaf on $\M_0$ obtained by restricting $\mathcal{E}xt^{1}_{p\circ i}(\mathbb{G},\mathbb{G}(\mathbb{S}))$ to $\M_0$ then \begin{equation*} 
[\mathcal{M}_0, E^{\bullet}]^{vir}=c_{top}(\ob_0) \cap[\mathcal{M}_0, F_0^{\bullet}]^{vir}.
\end{equation*}
\end{prop}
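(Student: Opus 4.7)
The plan is to reduce the comparison of the two virtual cycles to a Whitney identity for Chern classes of locally free sheaves on the smooth moduli $\M_0$. On a type I component, Lemma \ref{lem:mukai} makes $\M_0$ smooth, so the excess-intersection formula of \cite[Proposition 5.6]{a2} applies to both obstruction theories and reduces the computation to the obstruction bundles.

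First I would identify the obstruction sheaves. By Theorem \ref{thm:trunc1} the obstruction sheaf of $E^\bullet$ is $\mathcal{E}xt^2_{\pi_{\M}}(\FF,\FF)|_{\M_0}$. Unpacking the defining triangle $F_0^\bullet\to G^\bullet\to\rho^\ast\omega_C[1]$ of Proposition \ref{prop:absolute}, together with vanishing of the Kodaira--Spencer connecting map $\rho^\ast T_C\to \mathcal{E}xt^2_{\pi'_{\M}}(\GG,\GG)|_{\M_0}$, one identifies the obstruction sheaf of $F_0^\bullet$ as $\mathcal{E}xt^2_{\pi'_{\M}}(\GG,\GG)|_{\M_0}$ (the piece $\rho^\ast\omega_C[1]$ only contributes to the tangent). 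Since $\M_0$ is smooth and the pointwise dimensions of all relevant $\mathrm{Ext}$ groups are constant on the fibres of $\pi'_{\M_0}$ (Mukai), cohomology and base change makes these $\mathcal{E}xt$-sheaves---including $\ob_0=\mathcal{E}xt^1_{\pi'_{\M}}(\GG,\GG(\SS))|_{\M_0}$---locally free.

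Next I would extract from the long exact sequence of cohomology sheaves of the exact triangle in Proposition \ref{exact-rhom} the short exact sequence of vector bundles
\[ 0 \to \mathcal{E}xt^2_{\pi'_{\M}}(\GG,\GG)|_{\M_0} \to \mathcal{E}xt^2_{\pi_{\M}}(\FF,\FF)|_{\M_0} \to \ob_0 \to 0. \]
Surjectivity on the right is immediate from that long exact sequence; injectivity on the left is again equivalent to the vanishing of the Kodaira--Spencer connecting map above. The latter is a morphism of line bundles on $\M_0$, and by generic smoothness of the dominant morphism $\rho:\M_0\to C$ it vanishes on a dense open; since a morphism of line bundles that vanishes on a dense open vanishes globally, it vanishes everywhere on $\M_0$.

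Finally, \cite[Proposition 5.6]{a2} yields
\[ [\M_0,E^\bullet]^{vir}=c_{top}(\mathcal{E}xt^2_{\pi_{\M}}(\FF,\FF)|_{\M_0})\cap[\M_0],\qquad [\M_0,F_0^\bullet]^{vir}=c_{top}(\mathcal{E}xt^2_{\pi'_{\M}}(\GG,\GG)|_{\M_0})\cap[\M_0], \]
and Whitney multiplicativity applied to the short exact sequence above gives
\[ c_{top}(\mathcal{E}xt^2_{\pi_{\M}}(\FF,\FF)|_{\M_0})=c_{top}(\ob_0)\cdot c_{top}(\mathcal{E}xt^2_{\pi'_{\M}}(\GG,\GG)|_{\M_0}). \]
Combining these three identities proves the proposition. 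The most delicate point is verifying vanishing of the Kodaira--Spencer connecting map on all of $\M_0$, which uses both the type I hypothesis (to get generic smoothness of $\rho$) and the fact that the two sheaves involved have rank one.
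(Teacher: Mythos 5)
Your argument is essentially correct, but it takes a genuinely different route from the paper. The paper never identifies the obstruction sheaf of $E^\bullet$ on $\M_0$ at all: from Proposition \ref{exact-rhom} it extracts a short exact sequence of vector bundle stacks $h^1/h^0({F_0^\bullet}^\vee)\to h^1/h^0({E^\bullet}^\vee)\to \ob_0$ and then compares the two virtual classes directly by functoriality of refined Gysin maps applied to the intrinsic normal cone, $0_{{E^\bullet}^\vee}^{!}[\fC_{\M_0}]=c_{top}(\ob_0)\cap 0_{{F_0^\bullet}^\vee}^{!}[\fC_{\M_0}]$; in particular smoothness of $\M_0$, local freeness of $\mathcal{E}xt^2_{\pi_{\M}}(\FF,\FF)|_{\M_0}$, and the vanishing of any connecting map play no role there. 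You instead invoke Lemma \ref{lem:mukai} and \cite[Proposition 5.6]{a2} to write both virtual classes as Euler classes of obstruction bundles and compare them by Whitney's formula applied to the sheaf-level sequence coming from Proposition \ref{exact-rhom}. This does prove the statement, and the burden falls exactly where you say: on identifying $h^{1}({F_0^{\bullet}}^{\vee})$ with $\mathcal{E}xt^2_{\pi'_{\M}}(\GG,\GG)|_{\M_0}$ and on killing the connecting map $\Home_{\pi'_{\M}}(\GG,\GG(\SS))\to\mathcal{E}xt^2_{\pi'_{\M}}(\GG,\GG)$. Your generic-smoothness-plus-line-bundle-rigidity argument for the latter is sound once you supply the identification, asserted but not proved, of the map $\mathcal{E}xt^1_{\pi_{\M}}(\FF,\FF)|_{\M_0}\to \Home_{\pi'_{\M}}(\GG,\GG(\SS))$ with the differential of $\rho$ (i.e.\ of the support morphism); alternatively, since $\M_0$ is smooth of dimension one more than the fibres of $\rho$, the fibrewise exact sequence forces the connecting map to vanish at every point, with no genericity or rigidity needed. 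Note also that the identification $h^{1}({F_0^{\bullet}}^{\vee})\cong R^{2}\pi'_{\M_0*}R\mathscr{H}om_{\SS}(\GG,\GG)$ you assume is precisely what the paper itself asserts just before Proposition \ref{decompos2}, so you are not importing more than is used later anyway. The trade-off: the paper's cone/Gysin argument is insensitive to the smoothness of $\M_0$ and to the geometry of $\rho$, whereas your argument makes the Euler-class content of both sides explicit and in effect proves Proposition \ref{decompos} and (a form of) Proposition \ref{decompos2} in one stroke, at the cost of the extra vanishing statements.
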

\begin{proof}
By Proposition \ref{exact-rhom}, $\ob_0$ fits into the short exact sequence of the vector bundle stacks over $\M_0$ (see \cite{a2}):

$$h^1/h^0({F_0^\bullet}^\vee)\xrightarrow{i} h^1/h^0({E^\bullet}^\vee)\xrightarrow{j} \ob_0.$$ Denote by $0_{\ob_0}$ and $0_{{E^\bullet}^\vee}$ the zero section embeddings of $\M_0$. Then this gives the Cartesian diagram 

\begin{equation*} 
\xymatrix{
    h^1/h^0({F_0^\bullet}^\vee)  \ar[r]^i \ar[d] & h^1/h^0({E^\bullet}^\vee)  \ar[d]^j\\
\M_0  \ar[r]^{0_{\ob_0}} & \ob_0}
\end{equation*}

 Now if $\mathfrak{C}_{\M_0}$ is the intrinsic normal cone of $\M_0$, from the diagram above, we get (see \cite{a111}, \cite{a2})
\begin{align*}
[\mathcal{M}_0,E^{\bullet}]^{vir}&=0_{{E^\bullet}^\vee}^{!}[\mathfrak{C}_{\mathcal{M}_0}]=(i\circ 0_{{F_0^\bullet}^\vee})^{!}[\mathfrak{C}_{\mathcal{M}_0}]
=0_{{F_0^\bullet}^\vee}^{!}\circ i^{!}[\mathfrak{C}_{\mathcal{M}_0}]\\
&=0_{{F_0^\bullet}^\vee}^{!}\circ 0_{\ob_0}^{!}[\mathfrak{C}_{\mathcal{M}_0}]
=c_{top}(\ob_0)\cap 0_{{F_0^\bullet}^\vee}^{!} [\mathfrak{C}_{\mathcal{M}_0}]\\
&=c_{top}(\ob_0)\cap [\mathcal{M}_0,F_0^{\bullet}]^{vir}. 
\end{align*}
\end{proof}

For our later application we need to express $c_{top}(\ob_0)$ in terms of the top Chern class of the tangent sheaf of $\M_0$, where $\ob_0$ and $\M_0$ are as in Proposition \ref{exact-rhom}. The component $\M_0$ is nonsingular (see Lemma \ref{lem:mukai}) with the relative tangent sheaf $\mathcal{T}_{\M_0/C}=\mathcal{E}xt^{1}_{p\circ i}(\mathbb{G},\mathbb{G})$ (see \cite[Corollary 4.5.2]{a9} and \cite{a138}). 
\begin{lemma}\label{ctop}
Suppose that the dimension of the type I component $\M_0$ is $n$. Then we have the following relation in $H^{*}(\M_0,\Q)$:
\begin{equation*}
c_{top}(\ob_0)= c_{n}(\mathcal{T}_{\M_0/C})+A \cdot \rho^*(B).
\end{equation*}
for some $A \in A^{n-1}(\M_0)$ and $B \in A^1(C)$.
\end{lemma}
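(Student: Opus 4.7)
The plan is to recognize that $\ob_0$ and $\mathcal{T}_{\M_0/C}$ differ only by twisting by a line bundle pulled back from $C$, after which the claim reduces to the standard formula for Chern classes of $E\otimes L$ with $L$ invertible.

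First, I would establish the isomorphism $\ob_0\cong \mathcal{T}_{\M_0/C}\otimes \rho^*\L_0$. Since $\mathbb{G}(\SS)=\mathbb{G}\otimes\O_\SS(\SS)$ and $\O_\SS(\SS)|_{\M_0\times_C X}\cong \pi'^{*}_{\M_0}\rho^*\L_0$ by hypothesis, one has
$$R\Home(\mathbb{G},\mathbb{G}(\SS))\cong R\Home(\mathbb{G},\mathbb{G})\otimes\pi'^{*}_{\M_0}\rho^*\L_0$$
over $\M_0\times_C X$. Pushing forward by $R\pi'_{\M_{0}*}$ and invoking the projection formula yields
$$\mathcal{E}xt^{1}_{\pi'_{\M_0}}(\mathbb{G},\mathbb{G}(\SS))\cong \mathcal{E}xt^{1}_{\pi'_{\M_0}}(\mathbb{G},\mathbb{G})\otimes \rho^*\L_0,$$
which is exactly $\ob_0\cong \mathcal{T}_{\M_0/C}\otimes\rho^*\L_0$.

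Then the Chern class computation is formal. Applying the identity $c_k(E\otimes L)=\sum_{i=0}^{k}\binom{r-i}{k-i}c_i(E)c_1(L)^{k-i}$ with $E=\mathcal{T}_{\M_0/C}$ of rank $r$, $L=\rho^*\L_0$, and $k=r$ produces
$$c_{top}(\ob_0)=c_r(\mathcal{T}_{\M_0/C})+\sum_{j=1}^{r}c_{r-j}(\mathcal{T}_{\M_0/C})\cup c_1(\rho^*\L_0)^{j},$$
which has the form claimed in the lemma after setting $A_j:=c_{r-j}(\mathcal{T}_{\M_0/C})$; any formally out-of-range terms vanish, and in fact $c_1(\rho^*\L_0)^{j}=\rho^*(c_1(\L_0)^{j})$ vanishes for $j\geq 2$ since $C$ is a curve, so the sum really collapses to a single nontrivial correction term. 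The only substantive point is the projection formula invoked in Step 1, which is routine given the flatness of $\mathbb{G}$ over $\M_0$ and the fact that $\O_\SS(\SS)$ is a genuine line bundle; everything else is Chern-class bookkeeping, so I do not anticipate real obstacles beyond keeping track of indices.
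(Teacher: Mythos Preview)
Your argument is correct and cleaner than the paper's own proof. The paper applies Grothendieck--Riemann--Roch to the alternating sum $\sum_i (-1)^i \Ext^i_{\pi'_{\M_0}}(\mathbb{G},\mathbb{G}(\SS))$, isolates the factor $\ch(\O(\SS))\cdot\operatorname{td}(\O(-\SS))$ to see that the difference from the Chern character of the untwisted Ext-complex is divisible by $c_1(\rho^*\L_0)$, deals with the $i=0,2$ contributions separately, and then passes from Chern characters to Chern classes by an induction on $n$. You bypass all of this by observing that the projection formula gives the sheaf isomorphism $\ob_0\cong \mathcal{T}_{\M_0/C}\otimes\rho^*\L_0$ directly, after which the standard formula for $c_{top}(E\otimes L)$ yields the claim with explicit coefficients $A_j=c_{r-j}(\mathcal{T}_{\M_0/C})$. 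Your further remark that all terms with $j\ge 2$ vanish because $C$ is a curve is a sharpening not stated in the paper, but it is exactly what is invoked downstream when the lemma is applied in the proof of Theorem~\ref{thm:main formula}.

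One minor indexing point worth flagging: since $\rho:\M_0\to C$ has one-dimensional base, the rank $r$ of $\mathcal{T}_{\M_0/C}$ (and hence of $\ob_0$) is $n-1$, not $n$. The ``$c_n$'' in the lemma's statement therefore appears to be a slip for $c_{n-1}$, i.e.\ for $c_{top}(\mathcal{T}_{\M_0/C})$; your computation produces this correct version.
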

\begin{proof}
Grothendieck-Riemann-Roch formula gives: 
\begin{align*}
&\operatorname{ch}\bigg(\sum_{j=0}^2(-1)^j\Ext^j_{p \circ i}(\mathbb{G},\mathbb{G}(\mathbb{S}))\bigg)\\&=p_* \, i_* \bigg(\operatorname{ch}(\mathbb{G})^{\vee}\cdot \operatorname{ch}(\mathbb{G})\cdot\operatorname{ch}(\mathcal{O}_{\SS}(\mathbb{S}))\cdot \operatorname{td}(X {_{\pi}}\times_\rho \M_0) \bigg)\\&=(1+c_1(\rho^* T_C))\cdot \operatorname{ch}\bigg(\sum_{j=0}^2(-1)^j\Ext^j_{p \circ i}(\mathbb{G},\mathbb{G}))\bigg)
\end{align*}
Note that $\operatorname{ch}(\Ext^i_{p\circ i}(\mathbb{G},\mathbb{G}(\mathbb{S})))$ and $\operatorname{ch}(\Ext^i_{p\circ i}(\mathbb{G},\mathbb{G}))$ for $i=0,2$ are the pull back of classes from $A^*(C)$ by the arguments similar to one given in the paragraph after this lemma. The lemma is proven by an inductive argument on $n$.
\end{proof}

To find the relation between the virtual cycle $[\M_0,F_0^\bullet]^{vir}$ and $[\M_0]$, we note that the obstruction sheaf 
$$h^{1}({F_0^{\bullet}}^{\vee})\cong \Ext^2_{p\circ i}(\mathbb{G},\mathbb{G})$$ 
is an invertible sheaf on $\M_0$. In fact the trace map defines an isomorphism 
$$\Ext^2_{p\circ i}(\mathbb{G},\mathbb{G})\xrightarrow{tr}R^{2}(p\circ i)_*\mathcal{O}_{\SS},$$
since by Nakayama lemma it is enough to show that $tr$ gives isomorphism on the level of fibers over the closed points of $\M_0$, and (fiberwise) the trace map is the Serre duality isomorphism. By Verdier duality and  $\omega_{p\circ i}\cong (p\circ i)^* \rho^* (\K\otimes T_C)$ we get
\begin{equation} \label{K} R^{2}(p\circ i)_*\mathcal{O}_{\SS} \cong ((p\circ i)_*\omega_{p \circ i})^{\vee} \cong \rho^* (\K\otimes T_C)^\vee.\end{equation}
Using \cite[Proposition 5.6]{a2}, we have proven
\begin{prop}\label{decompos2} Suppose that $\M_0$ is a type I component. Then we have
$$[\M_0,F_0^\bullet]^{vir}=\rho^* c_1( \K^\vee \otimes \omega_C)\cap [\M_0] .$$ \qed
\end{prop}

\begin{remark}
In more technical terms $[\M_0]=[\M_0,F^\bullet_0]_{red}^{vir}$, where the latter is \emph{the reduced virtual class} obtained from the obstruction theory $F_0^\bullet$ (see \cite{a84, a137, a90}). 
\end{remark}

We finish this section by studying what happens when we restrict the virtual cycle $[\M,E^\bullet]^{vir}$ to an isolated type II component $\M_{iso}$. 
\begin{prop} \label{prop:decompos3} Suppose that $\M_{iso}$ is an isolated type II component of $\M$, and let $\mathcal{T}_{iso}$ denote the tangent sheaf of $\M_{iso}$. Then $$[\M,E^\bullet]^{vir}|_{\M_{iso}}=c_{top}(\mathcal{T}_{iso})\cap [\M_{iso}].$$  
\end{prop}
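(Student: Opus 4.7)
The plan is to reduce to the smooth-with-locally-free-obstruction-bundle situation and apply \cite[Proposition 5.6]{a2}. By Definition \ref{defn:comps} the space $\M_{iso}$ is a connected component of $\M$, hence both open and closed, so the restriction $E^\bullet|_{\M_{iso}}\to \mathbb{L}^\bullet_{\M_{iso}}$ is a perfect obstruction theory on $\M_{iso}$ and
$$[\M,E^\bullet]^{vir}|_{\M_{iso}}=[\M_{iso},E^\bullet|_{\M_{iso}}]^{vir}.$$
By Lemma \ref{lem:mukai}, $\M_{iso}$ is smooth and irreducible, and by Definition \ref{defn:comps} it is isomorphic to the moduli space of rank $r$ stable sheaves on a fiber $S=\pi^{-1}(p)$, so $\mathcal{T}_{iso}$ is locally free with fiber $\Ex^1_S(\G,\G)$ over $[\F=i_{S*}\G]$.

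Next I will identify the obstruction sheaf $\ob:=h^1({E^\bullet|_{\M_{iso}}}^\vee)$, whose fiber at $[\F]$ is $\Ex^2_X(\F,\F)$. Since $\F$ is supported on the fiber $S$ and $K_X|_S$ is trivial by Assumption \ref{ass:K}, relative Serre duality for $\pi_{\M_{iso}}$ gives a global isomorphism
$$\ob\;\cong\;\Ex^1_{\pi_{\M_{iso}}}(\FF,\FF)^\vee=\mathcal{T}_\M|_{\M_{iso}}^\vee.$$
The isolated hypothesis states precisely that the natural inclusion $\mathcal{T}_{iso}\hookrightarrow \mathcal{T}_\M|_{\M_{iso}}$ is an isomorphism, so $\ob$ has constant fiber rank equal to $\dim\M_{iso}$, is therefore locally free on the smooth $\M_{iso}$, and globally $\ob\cong\mathcal{T}_{iso}^\vee$.

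Finally, $S$ is a smooth projective surface with trivial canonical bundle (by Assumption \ref{ass:K} together with the smoothness of $\pi$ in Subsection \ref{sec:smooth}), hence a $K3$ or abelian surface; Mukai's holomorphic symplectic form on $\M(S;P)\cong\M_{iso}$ yields a global isomorphism $\mathcal{T}_{iso}\cong\mathcal{T}_{iso}^\vee$. Thus $c_{top}(\ob)=c_{top}(\mathcal{T}_{iso}^\vee)=c_{top}(\mathcal{T}_{iso})$, and applying \cite[Proposition 5.6]{a2} to the smooth $\M_{iso}$ with locally free obstruction bundle $\ob$ gives
$$[\M_{iso},E^\bullet|_{\M_{iso}}]^{vir}=c_{top}(\ob)\cap[\M_{iso}]=c_{top}(\mathcal{T}_{iso})\cap[\M_{iso}],$$
as required. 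The most delicate point is establishing the global (not merely fiberwise) isomorphism $\ob\cong\mathcal{T}_{iso}^\vee$: this requires verifying that relative Serre duality on $X$ is compatible with the isolated identification $\mathcal{T}_{iso}=\mathcal{T}_\M|_{\M_{iso}}$, which can be checked by unpacking the long exact sequence arising from the triangle $\G\otimes N_{S/X}^\vee[1]\to Li_S^*i_{S*}\G\to \G$, using that $N_{S/X}$ is trivial for a fiber of the smooth morphism $\pi$.
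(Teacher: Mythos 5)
Your proof is correct, but it identifies the obstruction sheaf by a genuinely different mechanism than the paper does. The paper restricts the exact triangle of Proposition \ref{exact-rhom} (comparing $R\mathscr{H}om$ of $\FF$ on $X\times\M$ with $R\mathscr{H}om$ of $\GG$ on $\SS$) to $\M_{iso}$, takes the long exact sequence of relative Ext sheaves, and uses the isolated hypothesis to show the map $\Home_{\pi'_{\M_{iso}}}(\GG,\GG)\to\Ext^2_{\pi'_{\M_{iso}}}(\GG,\GG)$ is an isomorphism; this yields $h^1({E^\bullet}^\vee)|_{\M_{iso}}\cong\Ext^1_{\pi'_{\M_{iso}}}(\GG,\GG)=\mathcal{T}_{iso}$ on the nose, after which \cite[Proposition 5.6]{a2} applies directly. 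You instead identify the obstruction sheaf with $\mathcal{T}_{iso}^{\vee}$ via relative Serre duality and then equate $c_{top}(\mathcal{T}_{iso}^{\vee})$ with $c_{top}(\mathcal{T}_{iso})$ using Mukai's symplectic form (even rank of $\M(S;v)$ would also suffice). Each route has its cost: the paper's avoids duality and base-change subtleties and needs no symplectic input, but relies on the triangle comparing $\FF$ and $\GG$; yours avoids that comparison but needs (i) the sheaf-level (not just fiberwise) duality isomorphism, which does hold here because the isolated hypothesis together with simplicity of stable sheaves and fiberwise Serre duality makes all the relevant Ext ranks constant over the reduced, smooth $\M_{iso}$, so the relative Ext sheaves are locally free and commute with base change, and (ii) the fact that the fibers are K3 or abelian surfaces, so Mukai's symplectic structure \cite{a113} is available. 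Note that your flagged ``delicate point'' is settled by exactly this constant-rank argument, and your proposed alternative patch --- unpacking the triangle $\G\otimes N_{S/X}^{\vee}[1]\to Li_S^*i_{S*}\G\to\G$ with trivial normal bundle --- is in substance the paper's own proof via Proposition \ref{exact-rhom}. One small imprecision: since $\M_{iso}$ is open in $\M$, the tangent sheaf of $\M_{iso}$ tautologically equals $\mathcal{T}_{\M}|_{\M_{iso}}$; the actual content of Definition \ref{defn:comps} is that this sheaf agrees with the tangent sheaf of $\M(\pi^{-1}(p);P)$, which is what gives the constancy $\operatorname{ext}^1_X(\F,\F)=\operatorname{ext}^1_S(\G,\G)=\dim\M_{iso}$ that your argument needs; you use it correctly, but the phrasing about the ``natural inclusion'' should be adjusted accordingly.
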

\begin{proof}
 Applying $Rp_{*}$ to the exact triangle \eqref{triang}  in the proof of Proposition \ref{exact-rhom} (with $\M_0$ replaced by $\M_{iso}$) and taking cohomology, we get the following exact sequence on $\M_{iso}:$
$$\cdots \to \Home_{p\circ i}(\GG,\GG)\to \Ext^2_{p\circ i}(\GG,\GG)\to \Ext^2_{p}(\FF,\FF)\to \Ext^1_{p\circ i}(\GG,\GG)\to 0.$$
Note that $\GG\cong \GG(\SS)$ in this case. Since $\M_{iso}$ is a type II isolated component, by definition, we obtain the isomorphisms of the locally free sheaves
$$\Ext^1_{p\circ i}(\GG,\GG)\to \Ext^1_{p }(\FF,\FF)\cong \mathcal{T}_{iso},$$ from which we conclude that the first map in the sequence above is injective and hence an isomorphism. Hence, the exact sequence above implies that 
$$h^{1}({E^{\bullet}}^{\vee})\mid_{\M_{iso}}\cong \Ext^2_{p}(\FF,\FF) \cong \Ext^1_{p\circ i}(\GG,\GG).$$ 
Now the proposition follows from \cite[Proposition 5.6]{a2}. 
\end{proof}

\subsection{Smooth $K3$ fibrations} \label{sec:K3}
In this section we assume that the morphism \eqref{equ:fibration} is smooth and the fibers are $K3$ surfaces, and the conditions ($\star$iii)-($\star$iv) in Section \ref{sec:intro} are satisfied. Note that in this case, the condition ($\star$ii) is automatically satisfied.
Let $i:S\hookrightarrow X$ be the inclusion of the fiber of $\pi$ over a closed point $p\in C$, and suppose that a stable sheaf $\F \in \M$ is supported on $S$. Then $\F=i_*\G$ for a stable rank $r$ torsion-free sheaf $\G$ on $S$ with $c_1(\G)=\beta$ and $c_2(\G)=\tau$. It is easy to see that \begin{equation} \label{c-ch} i_*\beta^{PD}=\gamma^{PD} , \quad \quad (i_*(\beta^2/2-\tau)^{PD})^{PD}=\ch_3(\F)=\ch_3\end{equation} where $PD$ is the Poincar\'{e} dual. 
  Let \begin{equation} \label{mv} v=(r,\beta,\beta^2/2-\tau+r)\end{equation} be the corresponding Mukai vector and $\M(S,v)$ be the moduli space of semistable sheaves on $S$ with Mukai vector $v$. By our assumption, $\M(S,v)$ contains no strictly semistable sheaves, and hence it is nonsingular of dimension
 $$2-\int_S v \cdot v^\vee=2r\tau-(r-1)\beta^2-2(r^2-1).$$
The moduli space $\M(S,v)$ has been thoroughly studied \cite{a79,a113, a139}. The following result has been proven  in \cite{a113} and \cite[Section 6]{a9}: 

\begin{prop} \label{prop:def-inv} Let $S$ be a $K3$ surface and $v$ a primitive Mukai vector as in (\ref{mv}). Then $\M(S,v)$ is deformation invariant to $\hilb^n(S)$, the Hilbert scheme of $n$ points in $S$, where $$n=\beta^2/2- r\ch_3-r^2+1.$$ 
In particular, $e(\M(S,v))=e(\hilb^n(S)).$ \qed
\end{prop}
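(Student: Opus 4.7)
The plan is to reduce the statement to the now-classical structure theory of moduli spaces of stable sheaves on $K3$ surfaces, as developed by Mukai, O'Grady, Huybrechts, and Yoshioka. The two inputs I will invoke from \cite{a113} and \cite[Section 6]{a9} are: (i) for any primitive Mukai vector $v$ on a projective $K3$ surface $S$ with a generic polarization and $\langle v,v\rangle \ge -2$, the moduli space $\M(S;v)$ is a smooth, projective, irreducible holomorphic symplectic manifold of dimension $\langle v,v\rangle+2$; and (ii) any two such moduli spaces sharing the same self-intersection $\langle v,v\rangle$ are deformation equivalent as hyperk\"ahler manifolds, so in particular $\M(S;v)$ is deformation equivalent to $\hilb^n(S)$ whenever $\langle v,v\rangle = 2n-2$, since the latter is recovered from the Mukai vector $(1,0,1-n)$ on a suitable $K3$.

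First I would just compute the Mukai pairing of $v=(r,\beta,\beta^2/2-\tau+r)$ against itself using $\langle(r_1,\beta_1,s_1),(r_2,\beta_2,s_2)\rangle = \beta_1\cdot\beta_2 - r_1s_2 - r_2s_1$, obtaining
\[
\langle v,v\rangle \;=\; \beta^2 - 2r\bigl(\beta^2/2-\tau+r\bigr) \;=\; 2r\tau-(r-1)\beta^2-2r^2.
\]
Solving $\langle v,v\rangle=2n-2$ yields precisely $n = r\tau - (r-1)\beta^2/2 - r^2 + 1$, matching the formula in the statement, and confirming that $\dim\M(S;v)=2n$ as already noted in the text. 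Since Assumption \ref{ass:hilb} guarantees that $v$ is primitive and that there are no strictly semistable sheaves, the hypotheses of (i) apply directly, so $\M(S;v)$ is smooth, connected, holomorphic symplectic of the correct dimension.

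Next I would invoke (ii): there exists a smooth proper family $\mathcal{Y}\to T$ over a connected base with two fibers canonically identified with $\M(S;v)$ and $\hilb^n(S)$ respectively. Since Euler characteristic is constant in smooth proper families (Ehresmann's theorem in the analytic category, or deformation invariance of Hodge numbers), this yields $\chi(\M(S;v))=\chi(\hilb^n(S))$. The main conceptual obstacle is of course the deformation equivalence itself, which is a deep result and not something I would reprove; once it is cited, the numerical claim is a one-line consequence, and the only thing to check by hand is the compatibility of the dimension/invariant $n$ with the Hilbert polynomial data entering the excerpt, which is the computation carried out above.
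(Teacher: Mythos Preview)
Your proposal is correct and matches the paper's approach exactly: the paper simply cites \cite{a113} and \cite[Section 6]{a9} and appends a \qed without further argument, so your write-up in fact supplies more detail (the explicit Mukai pairing computation verifying the value of $n$, and the Ehresmann-type justification for the Euler characteristic equality) than the paper itself provides.
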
 

\begin{remark} \label{rem:nonproj}
For our later use, we need to extend the construction of $\M(X,\ch)$ to the case where the $K3$-fibration $\pi:X\to C$ is possibly not projective. We will consider the case where there are finitely many $K3$ fibers $S$ of $\pi$ over which $L$ restricts to a quasi-polarization (see \cite{a90}). We assume that there is a finite open affine cover $\{U_j\}$ of $X$ over $C$ such that $\pi_{U_j}$ is projective and maps $U_j$ to an open subset of $C$. In this case, for the compactly supported Chern character vector $\ch$ as in \eqref{ch} (with $F$ the fiber class) the moduli spaces $\M(U_i,\ch)$ can be constructed using the fiberwise polarizations. $\M(U_i,\ch)$ and $\M(U_j,\ch)$ are canonically isomorphic over the overlaps $U_i \cap U_j$, and so they can be patched together to give a proper scheme $\M=\M(X,\ch)$ over $C$. The perfect obstruction theories and the virtual cycles over $\M(U_i,\ch)$ constructed in the last section are also glued together to give the corresponding virtual cycles over $\M$; we define $DT(X,\ch)$ as in \eqref{DT}, using the properness of $\M$.
\end{remark} 

Let $$\V:R^2\pi_*(\Z)\to C$$ be the rank 22 local system determined by the $K3$-fibration $\pi$. Let $\mathcal{H}^{\V}$ denote the $\pi$-relative moduli space of Hodge structures as in \cite[Section 1.4]{a90}.\footnote{In \cite[Section 1.4]{a90} this is denoted by $\mathcal{M}^{\V}$.} There exists a section map $\sigma:C\rightarrow \mathcal{H}^{\V}$ which is determined by the Hodge structures of the fibers of $\pi$:
\begin{equation}\label{section-map}
\sigma(p)=H^{0}(X, K_{S})\in \mathcal{H}^{\V_{p}}.
\end{equation}
For any $\gamma^{PD} \in H_2(X,\Z)^\pi$ and $h \in \Z$ let $$\V_p(h,\gamma)=\{0\neq \beta \in \V_p|\beta^2=2h-2, i_* \beta^{PD}=\gamma^{PD}\},$$ and  $\operatorname{B}_p(h,\gamma)\subseteq \V_p(h,\gamma)$ be the subset containing $\beta \in \V_p(h,\gamma)$ where $\beta$ is a $(1,1)$ class on $S$ (the fiber over $p$). For any integer $h \in \Z $ the Noether-Lefschetz number $NL^\pi_{h,\gamma}$ is by definition the intersection number of $\sigma(C)$ with the $\pi$-relative Noether-Lefschetz divisor in $\mathcal{H}^\V$ associated to $h$ and $\gamma$.

$\operatorname{B}_p(h,\gamma)$ is finite by  \cite[Proposition 1]{a90}, and $\operatorname{B}(h,\gamma)=\cup_p \operatorname{B}(h,\gamma)\subset \V$ can be decomposed into $$\operatorname{B}_{I}(h,\gamma)\coprod \operatorname{B}_{II}(h,\gamma)$$ where the first component defines a finite local system $\epsilon:\operatorname{B}_{I}(h,\gamma)\to C$, and the second component is an isolated set. Let $\M_\epsilon$ be the type I component of $\M$ corresponding to the local system $\epsilon$, and let $$DT(X,\epsilon)=\deg([\M_\epsilon,E^\bullet]^{vir}) $$ be the contribution of this component to $DT(X, \ch)$. 
 By Propositions \ref{decompos} and \ref{decompos2} we have

\begin{align}\label{virtual}
[\M_\epsilon,E^\bullet]^{vir}&=c_{top}(\ob_\epsilon)\cap [\M_\epsilon,F_\epsilon^\bullet]^{vir} , \notag\\ [\M_\epsilon,F_\epsilon^\bullet]^{vir}&=\rho^* c_1(\K^\vee \otimes \omega_C) \cap [\M_\epsilon].\end{align} 

On the other hand, if $\alpha \in \operatorname{B}_{II}(h,\gamma)$ supported on the fiber $S$ is a result of a \emph{transversal intersection}  of $\sigma(C)$ with a Noether-Lefschetz divisor, then the corresponding connected component $\M_\alpha$ is an isolated type II component of $\M$ in the sense of Definition \ref{defn:comps} and is isomorphic to $\M(S,v)$ where $v$ is given as \eqref{mv}. 
By Proposition \ref{prop:decompos3} we have

\begin{equation}\label{virtual2} [\M_\alpha,E^\bullet]^{vir}=c_{top}(\mathcal{T}_{\M_\alpha})\cap [\M_\alpha].\end{equation} 
We let $$DT(X,\alpha)=\deg([\M_\alpha,E^\bullet]^{vir})$$ to be the contribution of $\M_\alpha$ to $DT(X,\ch)$.

\begin{proof}[Proof of Theorem \ref{thm:main formula}] We compare the contributions of $B_I$ and $B_{II}$ to 
$DT(X,\ch)$ and the Noether-Lefschetz numbers. Using the notation above 
\begin{align*}
DT(X,\epsilon)&=\int_{[\M_\epsilon]}c_{top}(\ob_\epsilon)\cdot \rho^* c_1(\K^\vee \otimes \omega_C)\\&=
\int_{[\M(S,v)]} c_{top}(\mathcal{T}_{\M(S,v)})\cdot \int_{B_{I}(h,\gamma)} \rho^* c_1(\K^\vee \otimes \omega_C)
\\&=e(\hilb^{h-r\ch_3-r^2}(S))\cdot \int_{B_{I}(h,\gamma)} \rho^* c_1(\K^\vee \otimes \omega_C),
\end{align*}  where the first equality is because of \eqref{virtual}, the second equality holds by Lemma \ref{ctop}, and the last equality is due to Proposition \ref{prop:def-inv}.

By virtue of \eqref{K} and \eqref{section-map}, and the argument in the proof of  \cite[Theorem 1]{a90}, $\int_{B_{I}(h,\gamma)} \rho^* c_1(\K^\vee \otimes \omega_C)$ gives the contribution of $B_{I}(h,\gamma)$ to $NL^\pi_{h,\gamma}$.

Next, suppose that $\alpha \in B_{II}(h,\gamma)$. Using the deformation invariance of DT invariants and the intersection numbers, we may assume that the corresponding component $\M_\alpha$ is an isolated type II component after possibly a small analytic perturbation of the section $\sigma$ (which locally turns a multiplicity $n$ intersection into $n$ transversal intersections).\footnote{
This can be done, because $\M_\alpha$ being a compact isolated component of the moduli space, is entirely contained in $\rho^{-1}(\Delta)$ where $\Delta\subset C$ is a small open disk.} 
Once this is done, the contribution of $\alpha$ to $NL^\pi_{h,\gamma}$ is exactly 1, and moreover we can use \eqref{virtual2} and Proposition \ref{prop:def-inv} to deduce $$DT(X,\alpha)=\int_{\M_\alpha}c_{top}(\mathcal{T}_{\M_\alpha})=e(\hilb^{h-r\ch_3-r^2}(S)).$$ 

The proof of theorem is completed by adding the correction term involving the Kronecker delta function to take into account the contributions of $\beta=0$ to $DT(X,\ch)$.
\end{proof}

By formula \eqref{symmetry}, Theorem \ref{thm:main formula} implies the following symmetry in the invariants:

\begin{cor} \label{cor:comp}  Suppose that $\ch$ is as in \eqref{ch}, and $k\in \Z, r' \in \Z_+$ are such that $$r' \mid r \ch_3 \gamma \cdot L+kF\cdot L^2/2+r^2$$ and $\ch'=(0,r'F,\gamma+kF\cdot L, (r\ch_3+\gamma \cdot L+kF\cdot L^2/2+r^2)/r'-r')$ satisfies ($\star$iv) in Section \ref{sec:intro}. Then $DT(X,\ch)=DT(X,\ch').$
\end{cor} \qed

\subsection{Nodal $K3$ fibrations} \label{sec:conifold}
In this section we consider the situation where $X$ is a projective nonsingular threefold and the morphism \eqref{equ:fibration} is smooth except at finitely many fibers; the smooth fibers are $K3$ surfaces, and the singular fibers are the contraction of nonsingular $K3$ surfaces at finitely many $-2$-curves. Therefore, finitely many fibers can have ordinary double point singularities (ODP).\footnote{Using the deformation invariance, finding the DT invariants of $X$, when the singularities of the fibers are of more general type of rational double points (RDP), may be reduced to this case.} Note that in this case, the condition ($\star$ii) in Section \ref{sec:intro} is automatically satisfied. We assume that the condition ($\star$iii) is also satisfied, and moreover  

\begin{assumption} \label{r=1}
We assume throughout this section that in the Chern character vector \eqref{ch}, $r=1$, so in particular ($\star$iv) in Section \ref{sec:intro} is satisfied for any choice of $\ch_2, \ch_3$.\footnote{In fact, if $r=1$, any pure coherent sheaf on $X$ 
with the Chern character as in \eqref{ch} is Gieseker stable.} 
\end{assumption}

Let $s_1,\dots, s_k\in X$ be the singular points of the fibers of $X$,  and assume that there are $k'>0$ singular fibers in $X$ over $c_1,\dots, c_{k'}\in C$. 
If $k'$ is even, define $c_0=c_1$ and if $k'$ is odd, define $c_0$ to be an arbitrary point of $C$ distinct from $c_1,\dots,c_{k'}$. Define $\epsilon: \widetilde{C}\to C$ to be the double cover of $C$ branched over the points $c_0,\dots,c_{k'}$. The fiber product $\epsilon^*(X)$ is a threefold with conifold singularities exactly at the inverse images of $s_i$'s. Denote by $\tX$ its small resolution with the exceptional nonsingular rational curves $e_1, \dots, e_k$, and let $\tpi:\tX\to \tC$ be the induced morphism. The normal bundle of $e_i$ in $\tX$ is isomorphic to $\O_{\P^1}(-1)\oplus \O_{\P^1}(-1)$ \cite{a121}. Moreover, we let $\epsilon_t: C_t\to C$ be a double cover of $C$ branched at $k+2\{k/2\} $ generic points of $C$ when $t\neq 0$ and $C_0=\tC$, and define $X_t=\epsilon^*_t(X)$.

Our plan is to relate the DT invariants of $\tX$ and $X_t$ which differ by the conifold transitions. As in GW theory \cite{a96}, \cite{a97} this can be done using the degeneration techniques. It is possible that $\tX$ is no longer projective in which case we use the modifications of Remark \ref{rem:nonproj} to define the DT invariants. See Appendix \ref{sec:app} for a review of the degeneration techniques in DT theory.

We start with the good degenerations (see Appendix \ref{sec:app})
\begin{equation*}
\tX \rightsquigarrow Y \bigcup_{D_1,\dots,D_k} \coprod_{i=1}^k \bP_{1} \quad \text{and}  \quad X_t \rightsquigarrow Y \bigcup_{D_1,\dots,D_k} \coprod_{i=1}^k \bP_{2}
\end{equation*} 
where the nonsingular projective threefold $Y$ is the blow up of $\epsilon^*(X)$ at the inverse images of $s_i$'s 
with the exceptional divisors $D_i\cong \P^1\times \P^1$, $$\bP_{1}\cong \P(\O_{\P^1}\oplus \O_{\P^1}(1)^2),$$ and $\bP_{2}$ is a nonsingular quadric in $\P^4$.
The first degeneration is the degeneration to the normal cone \cite{a35} in which $D_i \subset Y$ is attached to the divisor at infinity $H_1=\P(\O_{\P^1}(1)^2)$ in the $i$-th copy of $\bP_1$. The second degeneration is called the semistable reduction of a conifold degeneration \cite{a96} in which $D_i \subset Y$ is attached to a smooth hyperplane section $H_2$ in the $i$-th copy of $\bP_2$. For $i=1,2$ let $\bf{ch}^i$ be a Chern character vector on $\bP_i$ such that ${\bf{ch}}^i_0=0$ and ${\bf{ch}}^i=H_i$, and let $\M(\bP_i/H_i,\bf{ch}^i)$ be the corresponding moduli space of stable sheaves. 

\begin{lemma} \label{nothing on Pi} $\M(\bP_i/H_i,\bf{ch}^i)$ admits a perfect deformation-obstruction theory. 
\begin{proof}

The canonical bundle of $\bP_i$ is $K_{\bP_i}\cong -3H_i$. So for any geometric point $\F$ of the moduli space, we have $\ext^3(\F,\F)\cong \operatorname{Hom}(\F,\F,\otimes K_{\bP_i})=0$ by the stability of $\F$ and Serre duality.
\end{proof}
\end{lemma}

We denote by $\cX_1 \to \A^1$ and $\cX_2\to \A^1$ the total spaces of the first and second degenerations above. 
Let $\X_i\to \rC$ be the corresponding stack of expanded degenerations (see Appendix \ref{sec:app}).

Define open subsets $V_i=\tC \setminus \{\epsilon^{-1}(c_1),\dots ,\widehat{\epsilon^{-1}(c_i)},\dots ,\epsilon^{-1}(c_{k'})\}$, and let the invertible sheaves $L_t$ and $\tL$ on  $X_t$ and $\tX$ be as follows: $$L_t=\epsilon^*_t(L),\;\; \text{and} \;\; \tL=\widetilde{\epsilon}^*(L)-\sum_{i=1}^k\tD_i$$ where $\widetilde{\epsilon}:\tX\to X$ is the natural morphism, and  $\tD_i$ is a divisor on $\tX$ with $e_i\cdot \tD_i=-2$ and $e\cdot \tD_i =0$ for any other curve $e$ on $U_i=\tpi^{-1}(V_i)$. $L_t$ defines a polarization on $X_t$ and $\tL$ defines a polarization on $U_i$ for each $i=1,\dots,k'$. Let $F_t$,  $\widetilde{F}$, and $F'$ respectively be the class of fibers in the fibrations $X_t\to C_t$, $\tX\to \tC$, and $Y\to \tC$,
 and $\ch^t$, $\widetilde{\ch}$, and $\ch'$ be the Chern character vectors on respectively $X_t$, $\tX$, and $Y$ such that $$\ch^t_{0}=\widetilde{\ch}_{0}=\ch'_0=0, \quad \ch^t_{1}=F_t, \quad \widetilde{\ch}_{1}=\widetilde{F}, \quad \ch'_1=F'.$$
Note that we put no restrictions on $\ch'_k,\widetilde{\ch}_k,\ch^t_{k}$ for $k=2,3$.
Let $\M^{L_t}(X_t,\ch_t)$ and $\M^{\tL}(\tX,\widetilde{\ch})$ be the corresponding moduli spaces of stable sheaves. Note that $\tL$ is a polarization on $U_i$'s and $\M(\tX,\widetilde{\ch})$ is constructed by gluing the moduli spaces $\M(U_i,\widetilde{\ch})$ as in Remark \ref{rem:nonproj}.

Fix an invertible sheaf $\L_1$ on $\cX_1$ whose restriction to a general fiber is $\tL$ and to $Y$ is $$L':=b^*L-\sum_{i=1}^kD_i,$$ where $b:Y\to X$ is the natural morphsim. Similarly, fix a relatively ample invertible sheaf $\L_2$ on $\cX_2$ whose restriction to a general fiber is $L_t$ and to $Y$ is $L'$. The invertible sheaf $L'$ defines a polarization on $Y$. Let $\mathfrak{c}\mathfrak{h}^i$ for $i=1,2$ be a Chern character vector on $\X_i/\rC$ whose restriction to a general fiber over $\rC$ is respectively $\widetilde{\ch}$ and $\ch^t$. For  $1\le g \le k$, and $1\le  i_1,\dots, i_g \le k$, let $\ch^{i_1,\dots, i_g}$ be a Chern character vector on $Y$, with $\ch^{i_1,\dots, i_g}_0=0$ and $\ch^{i_1,\dots, i_g}_1=F'-D_{i_1}-\cdots-D_{i_g}$.
We let $$\M^{L'}(Y/D_1,\dots,D_k, \ch'), \quad \M^{L'}(Y/D_1,\dots,D_k, \ch^{i_1,\dots, i_g}),\quad\M(\X_i/\rC,\mathfrak{c}\mathfrak{h}^i)$$ be the corresponding relative moduli space of stable sheaves (Appendix \ref{sec:app}).

\begin{lemma} \label{ob theory on Y} Using the notation above, we have
\begin{enumerate} [i)] 
\item $\M(\X_i/\rC,\mathfrak{c}\mathfrak{h}^i)$ admits a perfect obstruction theory relative to $\rC$ and a virtual cycle of dimension 1.
\item $\M(Y/D_1,\dots,D_k,\ch')$ admits a perfect deformation-obstruction theory and a virtual cycle of dimension 0. 
\item $\M(Y/D_1,\dots,D_k,\ch^{i_1,\dots, i_g})$ admits a perfect deformation-obstruction theory with virtual cycle equal to zero.
\end{enumerate}
\end{lemma}
\begin{proof}

i) For any geometric point $\F$  of the moduli space $\ext^3(\F,\F)_0=0$ by Serre duality and the stability of $\F$ and noting that $\F\cong \F \otimes  \omega_{\cX_i/\A^1}$ where $\omega_{\cX_i/\A^1}$ is the relative dualizing sheaf. 

ii) For any geometric point $\F$  of the moduli space $\ext^3(\F,\F)_0=0$. This is because  we know that $K_Y\cong D_1+\cdots+D_k$ and $F\cdot D_i=0$, and since $c_1(\F)=F$ by assumption, Serre duality and the stability of $\F$ imply that $$\operatorname{Ext}^3(\F,\F)\cong \operatorname{Hom}(\F,\F)^\vee\cong \CC,$$ and hence $\operatorname{Ext}^3(\F,\F)_0=0$.

iii) For simplicity we assume $g=i_1=1$. 
By Serre duality $$\ext^3(\F,\F)=\Hom(\F,\F(D_1))^*$$ for any geometric point $\F$ of the moduli space. But $\F$ is the push forward of a rank 1 torsion free sheaf $\G$ on a nonsingular $K3$ surface, where $\G$ is an ideal sheaf of points $\I$ twisted by an invertible sheaf. Therefore,
$$\Hom(\F,\F(D_1))\cong \Hom_{K3}(\I,\I(\hat{e}_1)),$$ where $\hat{e}_1\cong \P^1$ is given as the intersection of $D_1$ with the proper transform of the fiber containing the curve $e_1$. Since $\hat{e}_1$ is a $-2$-curve on a $K3$ surface, we know that $H^0(\O(\hat{e}_1))=\CC$ which implies that $\ext^3(\F,\F)\cong \Hom_{K3}(\I,\I(\hat{e}_1))^*\cong \CC$, and hence $\ext^3(\F,\F)_0=0$ as required.

To prove the vanishing of the virtual cycle, we show that the virtual dimension is negative. By the Hirzebruch-Riemann-Roch calculation: \begin{align*} &\ex^0(\F,\F)-\ex^1(\F,\F)+\ex^2(\F,\F)-\ex^3(\F,\F)\\&=\int_{Y}((F-D_1)+\cdots)(-(F-D_1)+\cdots)(1-\frac{1}{2}\sum_i D_i+\cdots)\\&=(F-D_1)^2\cdot D_1/2=1,\end{align*} from which we get $\ex^1(\F,\F)-\ex^2(\F,\F)=-1.$ Hence the virtual cycle is zero.

\end{proof} 

Suppose that $P \in \Q[m]$ is a degree 2 polynomial with the leading coefficient equal to $FL^2/2$. We define $$DT_L(X,P)=\sum_{P_L(\ch)=P} DT(X,\ch),$$ where the sum is over all the Chern characters $\ch$ (of the form \eqref{ch} with $r=1$) with the Hilbert polynomial $P_L(\ch)=P$. 
Similarly, we can define $$DT_{L_t}(X_t,P), \quad DT_{\tL}(\tX,P),\quad DT_{L'}(Y,P),\dots.$$
Note that $\tL$ is a polarization only over $U_i$'s, but since any coherent sheaf under consideration is supported on a fiber $DT_{\tL}(\tX,P)$ is well-defined.

Now we are ready to state the conifold transition formula for our DT invariants:

\begin{prop}(Conifold Transition Formula) \label{prop:transition formula} Suppose that $P \in \Q[m]$ is a degree 2 polynomial with the leading coefficient equal to $FL^2/2$. Then,
$$DT_{L_t}(X_t,P)=DT_{\tL}(\tX, P).$$
\end{prop}
\begin{proof}
By Lemma \ref{ob theory on Y} $\M(\X_1/\fC ,\mathfrak{c}\mathfrak{h}^1)$ admits a perfect obstruction theory. 
Applying the degeneration formula \eqref{degform} in Appendix \ref{sec:app} which follows from the naturality of the virtual cycle $[\M(\X_1/\rC ,\mathfrak{c}\mathfrak{h}^1)]^{vir}$, and using part ii) of Lemma \ref{ob theory on Y} and Lemma \ref{nothing on Pi}, we can express $DT_{\tL}(\tX,P)$ in terms of the degrees of the product of the virtual cycles of the relative moduli spaces of $Y$ and $\bP_1$.  There are two possibilities for a geometric point $\F$ in the central fiber of $\M(\X_1/\fC,\mathfrak{c}\mathfrak{h}^1)$. Either $\F$ is completely supported on $Y$ (and possibly its degenerations) or there are some $i_1,\dots, i_g$ such that $$\ch'_1(\F|_Y)=F-D_{i_1}-\cdots-D_{i_g}, \quad \quad {\bf{ch}}^{i_j}_1(\F|_{\bP_1^{i_j}})=H_{i_j}$$ where $\bP_1^{i_j}$ is the $i_j$-th copy of $\bP_1$.  Only the former case contributes because of the vanishing of the virtual cycle proven in part iii) of Lemma \ref{ob theory on Y}. Therefore, $$DT_{\tL}(\tX,P)=DT_{L'}(Y/D_1,\dots,D_k,P).$$ 

Similarly, using the degeneration formula \eqref{degform} which follows from the naturality of the virtual cycle $[\M(\X_2/\rC ,\mathfrak{c}\mathfrak{h}^2)]^{vir}$, we can express $DT_{L_t}(X_t,P)$ in terms of the degrees of the product of the virtual cycles of relative moduli spaces of $Y$ and $\bP_2$.  Again by the same argument as in the last paragraph by distinguishing two similar cases and using Lemmas \ref{ob theory on Y} and \ref{nothing on Pi} we get $$DT_{L_t}(X_t,P)=DT_{L'}(Y/D_1,\dots,D_k,P).$$ 
Now the lemma follows from the last two identities.

\end{proof}

Now we choose $k'$ generic fibers $S_1,\dots, S_{k'}$ of $X\to \CC$. By our assumption $S_i$ is a nonsingular $K3$ surface. Let $\M(X/S_1,\dots, S_{k'},\ch)$ be the relative moduli space of stable sheaves with the Chern character $\ch$ (given in \eqref{ch}). By our conditions, one can see similar to the proof of Lemma \ref{ob theory on Y} that $\M(X/S_1,\dots, S_{k'}, \ch)$ admits a perfect obstruction theory and a virtual class of dimension zero.  

Let $X_i=S_i\times \P^1$. Then $X_i$ is a smooth $K3$-fibration over $\P^1$. We denote the class of the fibers by $F_i$, and $\ch^i$ a Chern character vector on $X_i$ such that $\ch^i_0=0$ and $\ch^i_1=F_i$ (and no restrictions on $\ch^i_2$ and $\ch^i_3$). Let $\M(X_i/S_i,\ch^i)$ be the corresponding relative moduli space of stable sheaves.
\begin{lemma} \label{K3-vanishing}
$DT(X_i/S_i, \ch^i)=0$.
\end{lemma}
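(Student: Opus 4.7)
The plan is to exploit the trivial product structure $X_i = S_i \times \P^1$ together with the natural $\CC^*$-action on $\P^1$ fixing $0$ and $\infty$. This action on $X_i$ fixes the distinguished divisor $S_i = S_i \times \{0\}$ pointwise, and since the normal bundle $N_{S_i/X_i}\cong \O_{S_i}$ is trivial, the action lifts canonically to the stack of expanded degenerations of $X_i$ along $S_i$. Consequently it lifts to $\M(X_i/S_i; Q)$ compatibly with the relative perfect obstruction theory, producing a $\CC^*$-equivariant refinement of the virtual cycle $[\M(X_i/S_i;Q)]^{vir}$.

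By $\CC^*$-equivariant virtual localization, $DT(X_i/S_i;Q)$ reduces to a sum of contributions from the $\CC^*$-fixed loci in the moduli space. A stable sheaf $\F$ with $c_1(\F)=F$ is supported on a single reduced fiber by Lemma \ref{reduced}, and being $\CC^*$-fixed forces that fiber to be a $\CC^*$-fixed fiber of some component of the expanded degeneration. After accounting for the stack automorphisms of the expansion (which include the $\CC^*$-rescalings of each bubbled $\bP \cong S_i \times \P^1$), each such fixed component is isomorphic to a Mukai moduli space $\M(S_i;v) \cong \hilb^n(S_i)$ for an appropriate Mukai vector $v$.

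The crucial step is to analyze the $\CC^*$-equivariant virtual normal bundle at these fixed components. Since $S_i$ is a $K3$ surface, the trace map exhibits a trivial summand in the obstruction sheaf of $\M(X_i/S_i;Q)$, namely the image of $\Ext^2_{\pi'}(\GG,\GG)\to R^2\pi'_*\O$, which is an isomorphism after identifying the fibers via Serre duality. Restricted to the fixed locus, this trivial summand carries a nonzero $\CC^*$-weight determined by the weight of $K_{X_i}$ on the fixed fiber, that is, the tangent weight of $\P^1$ at the attracting/repelling fixed point. Its contribution to the equivariant Euler class of the virtual normal bundle is therefore a nonzero multiple of the equivariant parameter $t$, whereas the fixed part of the tangent-obstruction complex is just the ordinary tangent of $\hilb^n(S_i)$ with no $t$-dependence. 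Each fixed contribution is thus a positive-degree cycle on $\hilb^n(S_i)$ multiplied by a rational function of $t$ vanishing as $t\to 0$, so the non-equivariant limit is zero. Summing over all fixed components yields $DT(X_i/S_i;Q)=0$.

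The main obstacle is making rigorous the inherited $\CC^*$-equivariance on the stack of expanded degenerations and cleanly identifying the trivial summand in the obstruction theory with its $\CC^*$-weight; this parallels the rubber vanishings for Gromov-Witten theory of trivial $K3$-fibrations established in \cite{a90}, with the trace-map argument for $K3$ surfaces providing the sheaf-theoretic analogue of the reduced-class mechanism.
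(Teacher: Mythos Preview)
Your localization approach is genuinely different from the paper's argument, but it contains a concrete error in the weight computation that breaks the proof as written.

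The paper's proof is much shorter: it first observes that for the trivial product $X_i=S_i\times\P^1$ the line bundle $\K$ of \eqref{K} is trivial, so by Propositions~\ref{decompos} and~\ref{decompos2} the \emph{absolute} invariant $DT(X_i;Q)$ vanishes. It then degenerates the base $\P^1$ to two copies of $\P^1$ glued at a point; the irreducibility of the class $F$ forces the degeneration formula \eqref{degfor} to read $DT(X_i;Q)=2\,DT(X_i/S_i;Q)$, and the lemma follows.

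In your argument the key step is the assertion that the trivial summand $R^2\pi'_*\O_{\SS}$ of the obstruction sheaf, restricted to the $\CC^*$-fixed locus, carries a \emph{nonzero} equivariant weight. This is false. The $\CC^*$-action on $X_i$ is trivial on the fixed fiber $S_i\times\{\infty\}$, so $\O_S$ has weight $0$ and hence $H^2(S,\O_S)\cong\CC$ has weight $0$. You may be conflating $K_{X_i}|_S$, which does carry the tangent weight of $\P^1$, with $K_S\cong\O_S$; the trace map lands in $H^2(\O_S)$, not in anything built from $K_{X_i}$. Even granting a nonzero weight, the subsequent reasoning is off: in virtual localization for a $0$-dimensional virtual class each fixed-locus contribution is already a number (degree $0$ in $t$), so ``a rational function of $t$ vanishing as $t\to 0$'' cannot be the mechanism; a moving trivial obstruction contributes a factor of $t$ to $1/e(N^{vir})$, but this is exactly compensated by the increased dimension of the fixed virtual class.

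There \emph{is} a correct localization argument lurking here: since the trivial summand has weight $0$, it sits in the \emph{fixed} part of the obstruction theory, so the fixed virtual class on each component $\M(S_i;v)$ is capped with $c_1$ of a trivial line bundle and hence vanishes. This is precisely the mechanism of Proposition~\ref{decompos2} transported to the fixed locus. Your instinct that the $K3$ trace cosection forces vanishing is right, but the route is through the fixed obstruction, not the moving one; and once you see that, the paper's two-line degeneration argument is the cleaner packaging.
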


\begin{proof}
$X_i$ is a smooth  $K3$ fibration, and Theorem \ref{thm:main formula} can be applied. Since $X_i$ is a trivial $K3$ fibration, $\M(X_i/S_i,\ch^i)$ has no type II components. In this case, $\K \cong \omega_C$, so $\rho^*c_1(\K^\vee\otimes \omega_C)=0$, and hence $DT(X_i,\ch^i)=0$. Next, by the degeneration formula \eqref{degform} applied to $X_i\rightsquigarrow X_i\coprod_{S_i} X_i$ and the irreducibility of the class $F_i$, we get $$2DT(X_i/S_i,\ch^i)=DT(X_i,\ch^i)$$ from which the result follows.
\end{proof}

\begin{lemma} \label{lem:rel=abs}
$DT(X/S_1,\dots, S_{k'}, \ch)=DT(X,\ch)$.
\end{lemma}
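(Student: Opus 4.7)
The plan is to apply the degeneration formula \eqref{degfor} from Appendix~\ref{sec:app} to a suitable degeneration of $X$ whose central fiber introduces the $S_i$ as gluing divisors, and then invoke Lemma~\ref{K3-vanishing} to kill every term on the right-hand side except the desired one.

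First I would iteratively degenerate $X$ to the normal cone of each $S_i$. Because each $S_i$ is a smooth $K3$ fiber of $\pi$, its normal bundle $N_{S_i/X}$ is trivial, so $\P(N_{S_i/X}\oplus\O)\cong S_i\times\P^1 = X_i$. Performing this at all of the $S_i$ (one at a time, or simultaneously) produces a flat family over $\A^1$ whose generic fiber is $X$ and whose central fiber is
$$X' \;=\; X \cup_{S_1} X_1 \cup_{S_2} \cdots \cup_{S_{k'}} X_{k'}.$$
The relative moduli space $\M(X/S_1,\dots,S_{k'};P)$ carries a $0$-dimensional virtual class by the remark preceding the statement, and each $\M(X_i/S_i;Q)$ carries a perfect deformation-obstruction theory by the same Serre-duality argument used in Lemma~\ref{ob theory on Y} (applied to the trivial product fibration).

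Next, applying \eqref{degfor} to this family, one can write
$$DT(X;P) \;=\; \sum_\eta DT\big(X/S_1,\dots,S_{k'}; P_0^\eta\big)\cdot\prod_{i=1}^{k'} DT\big(X_i/S_i; P_i^\eta\big),$$
the sum running over combinatorial splittings $\eta$ of the numerical type of $\F$ across the components of $X'$, consistent with the matching conditions along the $S_i$. By Lemma~\ref{K3-vanishing}, every factor $DT(X_i/S_i;Q)$ with nontrivial $Q$ vanishes, so the only $\eta$ that can contribute is the one with $P_i^\eta$ trivial for all $i$ and $P_0^\eta = P$. This unique surviving summand is by definition $DT(X/S_1,\dots,S_{k'};P)$, which gives the claim.

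The main obstacle I anticipate is the bookkeeping inside the degeneration formula: confirming that the constraint $c_1(\F)=F$ permits only finitely many relevant splittings, that the unique ``empty-on-bubbles'' splitting has gluing multiplicity $1$, and that the matching conditions along each $S_i$ correctly identify that contribution with $DT(X/S_1,\dots,S_{k'};P)$ rather than some twisted variant. Once these technical points are in place, the vanishing input from Lemma~\ref{K3-vanishing} closes the argument in a single step.
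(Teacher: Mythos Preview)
Your proposal is correct and follows essentially the same approach as the paper: degenerate $X$ to the normal cone of the $S_i$ (yielding $X \cup_{S_1,\dots,S_{k'}} \coprod_i X_i$ with $X_i = S_i \times \P^1$), apply the degeneration formula \eqref{degfor}, and use Lemma~\ref{K3-vanishing} to kill the bubble contributions. The only point the paper makes explicit that you leave as a caveat is that the irreducibility of the fiber class $F$ is what forces every splitting to place the sheaf entirely on $X$ or entirely on a single $X_i$, so that the combinatorics collapses to one surviving term.
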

\begin{proof}
We consider the good degeneration $X\rightsquigarrow X \coprod_{S_1,\dots, S_r}  X_i$, and apply the degeneration formula \eqref{degform}. The irreducibility of the class $F$, and the vanishing result of  Lemma \ref{K3-vanishing} proves the claim. 
\end{proof}

 We use Lemma \ref{lem:rel=abs} to relate the DT invariants of $X$ to $X_t$.  To achieve this, we use the good degeneration of $X_t$ obtained by degenerating its base $\tC$ to two copies of $C$ followed by attaching two copies of $X$ along the generic fibers $S_1,\dots,S_{k'}$.  The degeneration formula \eqref{degform} then implies that $$DT_{L_t}(X_t, P)=2DT_L(X/S_1,\dots, S_{k'}, P).$$ This together with Lemma \ref{lem:rel=abs} and Proposition \ref{prop:transition formula} proves 
\begin{thm}\label{half-DT} Suppose that $P\in \Q[m]$ is a degree 2 polynomial with the leading coefficient equal to $FL^2/2$. Then
$DT_{L}(X, P)=DT_{L_t}(\tX,P)/2$.   
\end{thm}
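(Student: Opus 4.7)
The plan is simply to chain together the three ingredients that have been assembled in the paragraph immediately preceding the statement. Concretely, the results in play are Proposition \ref{prop:transition formula}, the degeneration identity $DT(X_t;P_t)=2DT(X/S_1,\dots,S_{k'};P)$ read off from the degeneration formula \eqref{degfor} applied to a degeneration of $X_t$, and Lemma \ref{lem:rel=abs} identifying the relative and absolute DT invariants of $X$. None of these three facts has to be reproved here; the task is just to concatenate them correctly.

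First I would recall that by Proposition \ref{prop:transition formula} the conifold transition relating the smoothing $X_t$ to the small resolution $\tX$ leaves the DT invariants unchanged, so $DT(\tX;\tP=P)=DT(X_t;P_t=P)$, where the Hilbert polynomials are taken with respect to $\tL$ and $L_t$ respectively. Next I would invoke the degeneration of $X_t$ whose central fiber is two copies of $X$ glued along the $k'$ generic $K3$-fibers $S_1,\dots,S_{k'}$ (this is the double cover $\epsilon_t\colon C_t\to C$ degenerating to two copies of $C$ identified along the branch locus). Because $F$ is irreducible and the general fibers of each component are smooth $K3$ surfaces, the degeneration formula collapses to a single term for each side and yields $DT(X_t;P_t)=2\,DT(X/S_1,\dots,S_{k'};P)$, where the factor $2$ accounts for the two copies of $X$ in the degeneration. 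Finally, Lemma \ref{lem:rel=abs} gives $DT(X/S_1,\dots,S_{k'};P)=DT(X;P)$.

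Stringing these three equalities together,
\begin{equation*}
DT(\tX;\tP=P)\;=\;DT(X_t;P_t=P)\;=\;2\,DT(X/S_1,\dots,S_{k'};P)\;=\;2\,DT(X;P),
\end{equation*}
which rearranges to the assertion $DT(X;P)=\tfrac{1}{2}\,DT(\tX;\tP=P)$.

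The only genuinely delicate point is bookkeeping: one must check that the Hilbert polynomials on the two sides of the chain really match under the various pullbacks and relative constructions (in particular that $L_t=\epsilon_t^{*}L$ and the choice $\tL=\widetilde{\epsilon}^{*}L-\sum \tD_i$ are compatible with the condition $c_1(\F)=F$ preserved throughout), and that the perfect obstruction theories producing the virtual cycles on $\M(\X_i/\rC;\cP_i)$, $\M(Y/D_1,\dots,D_k;P')$, $\M(X/S_1,\dots,S_{k'};P)$ and $\M(\tX;\tP)$ are genuinely glued by the degeneration formula \eqref{degfor}. All of this has been arranged in Lemmas \ref{ob theory on Y}--\ref{K3-vanishing} and in Remark \ref{rem:nonproj}, so at this stage the proof reduces to assembling the chain of identities above.
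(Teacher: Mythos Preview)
Your proposal is correct and follows exactly the paper's own argument: the paper's proof is precisely the concatenation of Proposition \ref{prop:transition formula}, the degeneration identity $DT(X_t;P_t)=2DT(X/S_1,\dots,S_{k'};P)$ obtained by degenerating the base $C_t$ to two copies of $C$, and Lemma \ref{lem:rel=abs}. Your remarks on the bookkeeping of polarizations and obstruction theories also match the care taken in the paper.
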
 \qed

\begin{proof}[Proof of Theorem \ref{thm:K3}]

By \cite{a89} we know 
$$ \sum_{n\geq 0}e(\operatorname{Hilb}^{n}(K3))\cdot q^{n}=\prod_{n\geq 1}\frac{1}{(1-q^{n})^{24}},$$
so $\disp \sum_{n\geq 0}e(\operatorname{Hilb}^{n}(K3))\cdot q^{n-1}=\Delta(q)^{-1}$.  
 Combining this formula with the results of Theorems \ref{thm:main formula} and \ref{half-DT}, we can express the generating function of the DT invariants in terms of the product of two modular forms:
\begin{equation*} 
 Z(X,q)=\frac{\Phi^{\tpi}(q)}{2\Delta(q)}. 
 \end{equation*}

\end{proof}

\appendix

\section{Relative moduli space of sheaves and a degeneration formula} \label{sec:app}

In this appendix we review a part of the construction of Li and Wu in \cite{a87} and explain briefly how the moduli space of relative stable sheaves and the relative DT invariants can be defined in the special situation that is needed in this paper (Section \ref{sec:conifold}). 

Let $q:W\to \A^1$ be a \emph{good degeneration} of the projective threefolds, i.e.
\begin{enumerate}
\item $W$ is smooth, 
\item all the fibers except $\pi^{-1}(0)$ are smooth projective threefolds, 
\item  $\pi^{-1}(0)=W_1\cup_D W_2$ where $W_i$ is a smooth threefold, $D\subset W_i$ is smooth divisor, and $\pi^{-1}(0)$ is a normal crossing divisor in $W$.
\end{enumerate}
Li and Wu in \cite{a87} construct the Artin stack of expanded degenerations
$$
\xymatrix{\W \ar[r]^p \ar[d] &W\ar[d]^{q}\\
\rC \ar[r]^r& \A^1.}
$$
Away from $r^{-1}(0)$ the family $\W$ is isomorphic to the original family $$W \backslash \pi^{-1}(0)\to \A^1 \backslash 0.$$ The central fiber $\pi^{-1}(0)$ of the original family $W\to \A^1$ is replaced in $\W$ by a (non-disjoint) union over all $k$ of the $k$-step degenerations $$W[k]=W_1 \cup_D \P(\O\oplus N_D^\vee) \cup_D  \P(\O \oplus N_D^\vee)\cup_D \cdots \cup_D \P(\O \oplus N_D^\vee) \cup_D W_2$$ together with the automorphisms $\CC^{*k}$ induced from the $\CC^*$-action along the fibers of the standard ruled variety $\P(\O \oplus N_D^\vee)$. Similarly, for a pair of a smooth projective threefold $Y$ and a smooth divisor $D\subset Y$, the Artin stack of relative pairs 
$$
\xymatrix{\Y \ar[r]^p \ar[d] &Y\ar[d]\\
\fA \ar[r]& \text{spec } \CC}
$$

is defined in \cite{a87} using the $k$-step degenerations  
$$Y[k]=Y \cup_D \P(\O\oplus N_D^\vee) \cup_D  \P(\O \oplus N_D^\vee)\cup_D \cdots \cup_D \P(\O \oplus N_D^\vee)$$ together with the automorphisms $\CC^{*k}$ as above. 
We refer to the $j$-th ruled component (from left to right) of $Y[k]$ or $W[k]$ by $\Delta_j$ and for convenience $\Delta_0:=Y$ in the relative case and $\Delta_0:=W_1, \Delta_{k+1}:=W_2$ in the degeneration case; we also refer to the zero and infinity sections of the $j$-th ruled component by $D_j$ and $D_{j-1}$, respectively (if $k=0$ we take $D_0:=D$). Let $\pi_i:\Delta_i\to D$ be the natural projection.

\begin{defn}\label{def:rel}(\cite[Definition 3.1, 3.9, 3.12]{a87})
Let $\F$ be a coherent sheaf on a $\CC$-scheme $T$ of finite type, and suppose that $Z\subset T$ is a closed subscheme. $\F$ is called \emph{normal} to $Z$ if $\text{Tor}^{\O_T}_1(\F,\O_Z)=0$. A coherent sheaf $\F$ on $Y[k]$ or $W[k]$  is called \emph{admissible}  if $\F$ is normal to all $D_j$ for $j=0,\dots,k$. 

\end{defn}

Suppose that $\V$ is a locally free sheaf on $Y$ (respectively on $W$), and $\O(1)$ be an ample invertible sheaf on $Y$ (respectively $W\to \A^1$). Li and Wu construct the Quot schemes $\quot^{\V,P}_{Y/D}$ (respectively $\quot^{\V,P}_{\W/\rC}$)  
of quotients $\phi:p^*\V\to \F$ on $Y[k]$ (respectively on $W[k]$) for some $k$ satisfying 
\begin{enumerate}
\item $\F$ is addmissible,
\item $\phi$ has finitely many automorphisms covering the automorphisms induced by the $\CC^{*k}$-action on the target space,
\item The Hilbert polynomial of $\F$ with respect to $p^*\O(1)$ is $P$. 
\end{enumerate}
Moreover, they show that $\quot^{\V,P}_{Y/D}$ (respectively $\quot^{\V,P}_{\W/\rC}$) is a separated, proper over (respectively separated, proper over $\A^1$), Deligne-Mumford  (DM) stack of finite type (\cite[Theorems 4.14 and 4.15]{a87}). 

Even though the line bundle $p^*\O(1)$ is used for distingushing the components of $\quot^{\V,P}_{Y/D}$ and $\quot^{\V,P}_{\W/\rC}$, for the construction of the moduli space of sheaves on $\Y/\fA$ and $\W/\rC$ we will need to modify it so that it restricts to a $\Q$-ample divisor on the support of our 2-dimensional sheaves in $Y[k]$'s or $W[k]$'s. We here treat the relative case $\Y/\fA$. The degeneration case $\W/\rC$ can be handled similarly. 

First recall the construction of $\Y\to \fA$. It is constructed as the limit of stack quotient of $Y(k)\to \A^k$ by certain equivalence relations (see \cite{a87}) where $Y(k)$ is defined inductively by $$(Y(0),D(0)):=(Y,D),\quad \quad Y(k):=\operatorname{Bl}_{D(k-1)\times 0}(Y(k-1)\times D(k-1))\quad k\ge 1,$$ with $D(k)$ being the prober transform of $D(k-1)\times \A^1$. The morphism $p:\Y\to Y$ is induced from the natural projections $p:Y(k)\to Y$ that we also denote by the same symbol $p$. 

For any $k\ge0$, we define\footnote{We have dropped the obvious pullback symbols from the notation.}
$$\tilde H_k:=mH-\sum_{i=1}^k g_i \Delta(i)$$ where $H:=p^*\O(1)$, $\Delta(i):=\Delta_i\times \A^{k-i}$ and \begin{equation}\label{gis}g_1=1,\; g_2=3/2,\; g_3=7/4,\; \dots, g_k=2-1/2^{k-1}.\end{equation}
One can check that $\tilde H_k$ is invariant under all equivalence relations on $Y(k)$ and hence descends to a relative $\Q$-Cartier divisor $\tilde H$ on $\Y\to \fA$.

\begin{lemma} \label{nakmoi}
Let $\F$ be an admissible pure 2-dimensional sheaf on $Y[k]$ so that $\F|_{D_i}$ is pure 1-dimensional, and the support of $\F|_{\Delta_i}$ is integral for any $i=0,\dots, k$, then $\tilde H |_{\supp(\F)}$ is very ample for $m\gg 0$.
\end{lemma}
\begin{proof}
First of all, if $C$ is any curve in $\Delta_0$ or in $D_i$ for $i>1$, and also if $G$  is a 2-dimensional subvariety of $\Delta_0$ or is one of $D_i$ for $i>1$ then we have $$\tilde H_k\cdot C>0,\quad \tilde H_k^2\cdot G>0$$ all because $m\gg 0$, and $H$ is ample on $Y$.

Next suppose $f_i$ is the class of a fiber of $\Delta_i$ for $i>0$ and $G_i:=\pi_i^{-1}(C)$ where $C\subset D\subset X$ is a curve then using the relations 

$$\Delta_i^2=-D_{i-1}-D_i \;\;\; i=1,\dots,k,\quad \Delta_k^2=-D_{k-1},$$ in the total space of $Y(k)$, and the fact that $m\gg0$ we can see that 
$$\tilde H_k\cdot f_i>0,\quad \tilde H_k^2\cdot G_i>0$$  are satisfied provided that $$2g_1>g_2,\;\; 2g_2>g_1+g_3,\dots, 2g_{k-1}>g_{k-2}+g_k,\;\; g_k>g_{k-1},\quad g_0=0, \quad k\ge 1.$$ It is now easy to see that the choice given in \eqref{gis} satisfies these inequalities.
\end{proof}





\begin{lemma} \label{Hstab1}\begin{enumerate}[i)]
\item Let $\F$ be as in Lemma \ref{nakmoi} so that $$\gamma:=\ch_2(\F), \quad \supp(\F|_{\Delta_i})=\pi_i^{-1}(\pi_i(\supp(\F|_{D_{i-1}}))),$$ and suppose $\F$ is $\tilde H$- semistable for $m\gg 0$. Then $\F$ is $\tilde H$-stable.
\item Let $\F$ be exactly as in part i) except that instead for some $1<j\le k$ the class of $\supp(\F|_{\Delta_j})$ is the same as the class of $D_{j-1}$. Then $\F$ is $\tilde H$-stable.
\end{enumerate}
\end{lemma}
\begin{proof}
We prove part i) for $k=1$. The other cases are similar.
 Let $$\Delta'_i:=\Delta_i\cap \supp(\F),\quad \quad  D'_i:=D_i\cap \supp(\F) \quad  \quad i=0, 1.$$ 

The $\tilde H_1$-semistability is implies  $$\frac{(\gamma_1-D'_0)\cdot \tilde H_1}{\Delta'_1\cdot \tilde H_1^2} \le\frac{\gamma\cdot \tilde H_1}{(\Delta'_0+\Delta'_1)\cdot \tilde H_1^2}\le \frac{\gamma_1\cdot \tilde H_1}{\Delta'_1\cdot \tilde H_1^2}.$$  Suppose that $$r:=\gamma \cdot D_0\in  \frac 1 2 \Z, \quad l:=\Delta'_0\cdot H^2\in \Z,\quad u:=D'_0\cdot  H\in \Z, \quad d:=D\cdot D'_0\in \Z.$$ Then, by the condition on $\supp(\F|_{\Delta_i})$ and the admissibility $\ch_2(\gamma |_{\Delta_1})=aD'_0+rf_1$ for some $a\in \frac 1 2 \Z$. We can compute 
$$(\Delta'_0+\Delta'_1)\cdot \tilde H_1^2=m^2l,\quad \Delta'_1\cdot \tilde H_1^2=2mu-d, \quad \gamma_1\cdot \tilde H_1=a(mu-d)+r.$$ Taking $v:=\gamma\cdot \tilde H_1$  the inequalities above are equivalent to $$z\le a\le z+1 \quad \text{where} \quad z=\frac{(2mu-d)v-m^2lr}{m^2l(2mu-d)} .$$ Now the denominator and numerator of $z$ are respectively of degrees 3 and at most 2 in $m$ respectively, thus, for $m\gg0$ the inequalities above must be strict.
\end{proof}

\begin{lemma} \label{refl}
Suppose $\G$ is a pure 2-dimensional sheaf in a nonsingular projective threefold $X$. Suppose that $D\subset X$ is a nonsingular divisor and both $\G$ and $\G^{**}/\G$ are normal to $D$. Then, $\G|_D$ is a pure 1-dimensional sheaf. 
\end{lemma}
\begin{proof}
Here as in \cite[Def. 1.1.7]{a9}, $\G^*=\Ext^1(\G,\omega_X)$. By \cite[Prop. 1.1.10]{a9} $\G^{**}$ is reflexive, the natural map $\G\to \G^{**}$ is injective, and $$\G^{**}/\G\cong \Ext^2(\G,\omega)$$ is 0-dimensional. The normality of $\G$ implies that $\supp(\G)$ is transversal to $D$ and so are $\supp(\G^*)$ and $\supp(\G^{**})$. The normality of $\G^{**}/\G$ implies that $$\supp(\G^{**}/\G)\cap D=\emptyset.$$

Now by  \cite[Lem. 1.1.13]{a9} $\G^{**}|_{D}$ is pure 1-dimensional, so restricting the natural short exact sequence $$0\to \G\to \G^{**}\to \G^{**}/\G\to 0$$ to $D$ and using the above facts about the support of $\G^{**}/\G$, we conclude $\G|_D\cong \G^{**}|_D$ and hence is pure 1-dimensional.
\end{proof}

Fix a degree 2 polynomial $P$ and $N\gg 0$. We will consider the moduli space of the following objects:
\begin{enumerate}
\item $\F$ is a pure 2-dimensional sheaf on $Y[k]$ for some $k$ so that the Hilbert polynomial of $\F$ with respect to $H=p^*\O(1)$ is $P$.
\item $\F$ and $\F^{**}/\F$ are admissible.
\item $\F$ is semistable with respect to $\tilde H$ for $m\gg 0$.
\item There exists a surjection $$\V=\oplus_{i=1}^{P(N)}p^*\O(-N)\xrightarrow{s} \F$$ such that the pair $(\F,s)$ has only finitely many automorphism covering $\CC^{*k}$-automorphisms of $Y[k]$. 
\end{enumerate} 

Now for any $S$-family $\mathcal Y \in \Y(S)$ of relative pairs, an $S$-valued point of the moduli stack $\mathfrak{M}(Y/D,P)$ is by definition an $S$-flat family $\mathscr F$ of sheaves whose restriction to each fiber of $\mathcal Y_s$ over a closed point $s\in S$ satisfies conditions (1)-(4) above. A 1-morphism between two $S$-valued points $(\mathscr F, \mathcal Y)$ and $(\mathscr F', \mathcal Y')$ is a 1-morphism $\sigma:\mathcal Y\to \mathcal Y'$ in $\Y(S)$ such that $\mathscr F\cong \sigma^*\mathscr F'$.

 A smooth atlas for this moduli stack is  obtained by \begin{equation} \label{atlas}\coprod_{k\ge 0}\left[\quot^{p^*\V,P}(Y(k)/\A^k)^{\circ}/\CC^{*k} \right]\to \mathfrak{M}(Y/D,P),\end{equation} where $\circ$ stands for the open subset of the usual Quot scheme $\quot^{p^*\V,P}(Y(k)/\A^k)$ of quotients satisfying  conditions (1)-(4). The openness of condition (3) is standard, and the openness of conditions (2) and (4) was proven by \cite{a87}. For each $k$ the quotient stack is a Deligne-Mumford stack by the condition (4) as in \cite{a87}. The arrow in \eqref{atlas} is induced by the universal family over $\quot^{p^*\V,P}(Y(k)/\A^k)^{\circ}$. By the standard arguments then it follows that $\mathfrak{M}(Y/D,P)$ is an Artin stack of finite type. Here, we are also using the boundedness result of \cite{a87} as well as the usual boundedness of the semistable sheaves with fixed Hilbert polynomials.
 
 \begin{remark}
It can be seen that $ \mathfrak{M}(Y/D,P)$ is the stack quotient (see \cite{R05}) of Li-Wu Quot stack
$$ \mathfrak{M}(Y/D,P)\cong \quot^{\V,P}_{Y/D}/GL(P(N)).$$  
\end{remark}
 
Since any coherent sheaf has automorphisms which act as multiples of the identity, one can see that $\mathfrak{M}(Y/D,P)$ is $\CC^*$-gerb over an Artin stack that we denote by $\mathcal{M}(Y/D,P)$. In the case that there are no strictly semistable sheaves (with respect to $\tilde H$), by the condition (4) above and the argument in \cite{a87}, $\mathcal{M}(Y/D,P)$ is a Deligne-Mumford stack.

By the virtue of Assumption \ref{r=1}, one can check that in all the degeneration situations considered in this paper we can arrange for the following assumption to be satisfied. The key point is that under $r=1$ condition in Section \ref{sec:conifold}, any pure coherent sheaf with reduced irreducible support is Gieseker stable with respect to any polarization.

\begin{assumption} \label{relmod}
Any closed point of the moduli stack $\mathcal{M}(Y/D,P)$ corresponds to a coherent sheaf $\F$ on $Y[k]$ for some $k$ satisfying  the conditions in Lemmas \ref{nakmoi} and \ref{Hstab1}.
\end{assumption}

Suppose that  Assumption \ref{relmod} is satisfied. First of all, by Lemma \ref{Hstab1} and the discussion above $\mathcal{M}(Y/D,P)$ is a Deligne-Mumford stack. Note that since we require the Hilbert polynomial with respect to $H$ to be $P$, by admissibility of $\F$, the support of $\F$ can be of one of the forms in Lemma \ref{Hstab1}. 

Moreover, by Assumption \ref{relmod}, Li-Wu's proofs of separatedness/properness will go through to prove the separatedness/properness of our moduli stack $\mathcal{M}(Y/D,P)$. In fact, the main ingredients of Li-Wu proofs is first, the error function by which they measure the deviation from the admissibility, and second, the separatedness/ properness of the usual Grothendieck's Quot schemes. They show that after finitely many steps of modifications of a given non-admissible sheaf, their error function eventually vanishes and hence they attain admissibility. We use exactly their error function to control the admissibility, but instead of the quot schemes, we use the separatendness/properness of the usual moduli space of stable sheaves, noting that again by Lemma \ref{Hstab1}, we allow no $\tilde H$-strictly semistable sheaves with the given Hilbert polynomial $P$.

Suppose for a given coherent sheaf corresponding to a closed point of $\mathcal{M}(Y/D,P)$, the Hilbert polynomial of $\F|_{D_k}$ with respect to $H$ is $P_0$ (it is a degree 1 polynomial). By Lemma \ref{refl}, $\F|_{D_k}$ is a pure 1-dimensional sheaf. Suppose now that for any pure coherent sheaf $\F_0$ with Hilbert polynomial $P_0$ of our interest, we know that $\supp(\F_0)$ is reduced and irreducible; so $\F_0$ is stable with respect to any polarization and hence corresponds to a closed point of the moduli space of stable of $H|_D$-stable sheaves, denoted by $\M(D,P_0)$. Therefore, the restriction to the relative divisor defines a natural morphism \begin{equation} \label{relmap}\mathcal{M}(Y/D,P)\to \mathcal{M}(D,P_0).\end{equation}

%

Similarly, using  parallel Assumption \ref{relmod} and Lemmas   \ref{nakmoi}-\ref{refl} in the degeneration case, we can construct the Deligne-Mumford moduli stack $\M(\W/\rC,P)$ which is a finite type, separated and proper over $\fC$. 

Since we do not allow strictly semistable sheaves, in particular any closed point $\F \in \M(Y/D,P)$ (respectively $\M(\W/\rC,P)$) is simple, so if we additionally have  $\ext^3(\F,\F)_0=0$ then, by the standard arguments \cite{a20,a10, a84}, there is a perfect obstruction theory relative to the base $\fA$ (respectively $\rC$) given by 
\begin{equation} \label{equ:rel theory}\left(\tau^{[1,2]}R\pi_{*}(R\mathscr{H}om(\mathbb{F},\mathbb{F}))\right)^{\vee}[-1],
\end{equation} where $\FF$ is the universal sheaf (see the footnote on page 5) and $\pi$ denotes the projections $$\M(Y/D,P)\times_{\fA} \Y\to \M(Y/D,P), \quad  \M(\W/\rC,P)\times_{\rC} \W \to \M(\W/\rC,P).$$
Hence we get the virtual cycles  $$[\M(Y/D,P)]^{vir}\in A_{n}(\M(Y/D,P)),\quad  \quad [\M(\W/\rC,P)]^{vir}\in A_{n+1}(\M(\W/\rC,P)),$$ where $n$ is the rank of the obstruction theory above. 
From now on we assume that $n=0$, in which case, we can define the \emph{relative DT invariant} $$DT(Y/D, P)=\deg \;[\M(
Y/D,P)]^{vir}.$$

By naturality of the virtual cycle, the restriction of  $[\M(\W/\rC,P)]^{vir}$ to a general fiber $W_t$ of $q:W\to \A^1$ is $[\M(W_t,P)]^{vir}\in A_0(\M(W_t,P))$, and  
the degeneration formula for DT invariants can be expressed as  \begin{equation}\label{degfor}\deg [\M(W_t,P)]^{vir}= \deg [\M(\W^\dagger_0/\rC^\dagger_0,P)]^{vir},\end{equation}
where 
$$
\xymatrix{\W_0^\dagger \ar[r] \ar[d] &\W\ar[d]\\
\rC_0^\dagger \ar[r]& \rC}
$$
is the \emph{stack of node marking objects} in $\W_0$ as defined in \cite[Section 2.4]{a87} and $\M(\W^\dagger_0/\rC^\dagger_0,P)\subset \M(\W/\rC,P)$ is the corresponding open and closed subset. 

Now suppose that the moduli space $\M(D,P_0)$ is nonsingular. This is well-known to be the case if $D$ is a semi-Fano surface, e.g. $K3$ or $\P^2$. In this situation one can expand the right hand side of \eqref{degfor} into a more useful formula. 
Given the triple of Hilbert polynomials $\eta:=(P_1,P_0,P_2)$ of degrees $(2, 1, 2)$ satisfying $P=P_1-P_0+P_2$, we can form the following Cartesian diagram  

$$
\xymatrix{\M^{P_1, P_0, P_2} \ar[r] \ar[d] & \M(\W_0^\dagger/\rC_0^\dagger,P)\ar[d]\\
\rC^{\dagger, P_1, P_0, P_2}_0 \ar[r]& \rC}
$$ where the superscripts $P_i$ stand for the substack of $\rC$ decorated by the given Hilbert polynomials (see \cite{a87}). Similarly, we will consider the substacks $ \fA^{P_1,P_0}, \fA^{P_2,P_0}$ of $\fA$.
We have the commutative diagram 

$$
\xymatrix{\M(W_1/D,P_1) \times_{\M(D,P_0)} \M(W_2/D,P_2) \ar[r]^-{\Phi}  \ar[d] &\M^{P_1, P_0, P_2} \ar[d]\\
\fA^{P_1,P_0}\times \fA^{P_2,P_0} \ar[r]^-{\cong}& \rC^{\dagger,P_1, P_0, P_2}_0}
$$ in which $\Phi$ is an isomorphism for the same reason as in \cite[Theorem 5.27]{a87}. Using this isomorphism, we can form the following Cartesian diagram
 
\begin{equation} \label{diagM}
\xymatrix{\M^{P_1, P_0, P_2} \ar[r]  \ar[d] &\M(W_1/D,P_1)\times \M(W_2/D,P_2) \ar[d]\\
\M(D,P_0) \ar[r]^-{\Delta}& \M(D,P_0)\times \M(D,P_0)}
\end{equation}
in which the vertical morphisms are defined by the restriction to the relative divisors as in \eqref{relmap}.

Suppose that $\FF, \FF_i, \FF_D$ are the universal families on $$\M^\eta \times_{\rC_0^{\dagger,\eta}} \W_0^{\dagger,\eta},\quad \M(W_i/D,P_i)\times_{\fA^{P_i,P_0}}\Y^{P_i,P_0},\quad  \M(D,P_0)\times D,$$ respectively and $\pi$ denotes the projections to the first factors in all the above products. Consider the following perfect obstruction theories
\begin{align}\label{perob}
F^\bullet:=\left(\tau^{[1,2]}R\pi_{*}(R\mathscr{H}om(\mathbb{F},\mathbb{F}))\right)^\vee[-1] &\xrightarrow{f} \mathbb L^\bullet_{\M^\eta/\rC^{\dagger,\eta}_0},\\ \notag
F_i^\bullet:=\left(\tau^{[1,2]}R\pi_{*}(R\mathscr{H}om(\mathbb{F}_i,\mathbb{F}_i))\right)^\vee[-1] &\xrightarrow{f_i} \mathbb L^\bullet_{\M(W_i/D,P_i)/{\fA^{P_i,P_0}}},\quad i=1,2,\\ \notag
F_D^\bullet:=\left(R\pi_{*}(R\mathscr{H}om(\mathbb{F}_D,\mathbb{F}_D))_0\right)^\vee &\xrightarrow{\cong} \mathbb {L}^\bullet_{\M(D,P_0)}\cong \Omega_{\M(D,P_0)},
\end{align} where in the last we are assuming that $D$ is semi-Fano (and hence smoothness of $\M(D,P_0)$).  
These all fit into the following commutative diagram in which the rows are exact triangles:
$$
\xymatrix{F_D^\bullet \ar[r]  \ar[d]^-\cong & \oplus_{i=1}^2 F_i^\bullet \ar[r] \ar[d]^-{(f_1,f_2)} & F^\bullet \ar[d]^-f\\
 L^\bullet_{\M^\eta/\M(W_1/D,P_1)\times \M(W_2/D,P_2)}\ar[r]& \oplus_{i=1}^2 L^\bullet_{\M(W_i/D,P_i)/{\fA^{P_i,P_0}}}\ar[r] &    L^\bullet_{\M^\eta/\rC^{\dagger,\eta}_0}. }
$$ Here the exactness of the upper row follows from the natural short exact sequence $$0\to \FF\to \FF_1\oplus\FF_2\to \FF_D\to 0$$ and the admissibility conditions, and the bottom row is the natural exact triangle of the cotangent complexes. The first column is an isomorphism because of the last line of \eqref{perob}, diagram \eqref{diagM} and the isomorphism 
$$\Omega_{\M(D,P_0)}\cong \mathbb L^\bullet_{\M(D,P_0)/\M(D,P_0)\times \M(D,P_0)}[-1].$$

Now the last diagram together with the argument in \cite[Section 6]{a87} (and also \cite{a84}), implies the following degeneration formula

\begin{equation} \label{degform}\deg[\M(W_t,P_t)]^{vir}=\sum_{P=P_1-P_0+P_2}\deg\left( [\M(W_1/D,P_1)]^{vir}\times [\M(W_2/D,P_2)]^{vir}\right)\end{equation} where the sum is over all possible splitting of the Hilbert polynomial $P$ on $W_t$ into the Hilbert polynomials $P_1,P_0,P_2$ on $W_1,D,W_2$ as in the above discussion.

\noindent {\tt{amingh@math.umd.edu}},\quad 
\noindent {\tt{University of Maryland}} \\
\noindent {\tt{College Park, MD 20742-4015, USA}} \\\\
\noindent {\tt{artan@qgm.au.dk}},\quad 
\noindent{\tt{Centre for Quantum Geometry of Moduli Spaces, Aarhus University, Department of Mathematics
Ny Munkegade 118, building 1530, 319, 8000 Aarhus C, Denmark}}\\\\
\noindent{\tt{artan@cmsa.fas.harvard.edu, Center for Mathematical Sciences and\\ Applications, Harvard University, Department of Mathematics, 20 Garden Street, Room 207, Cambridge, MA, 02139}}\\\\
\noindent{\tt{artan@cmsa.fas.harvard.edu, National Research University Higher School of Economics, Russian Federation, Laboratory of Mirror Symmetry, NRU HSE, 6 Usacheva str., Moscow, Russia, 119048}}\\\\

\end{document}